\newcommand{\rdd}{\mathbb{R}^{2d}}
\newcommand{\rd}{\mathbb{R}^{d}}
\newcommand{\cA}{\mathcal{A}}
\newcommand{\cF}{\mathcal{F}}
\newcommand{\R}{\mathbb{R}}
\newcommand{\bN}{\mathbb{N}}
\newcommand{\cS}{\mathcal{S}}
\let\originalleft\left
\let\originalright\right
\renewcommand{\left}{\mathopen{}\mathclose\bgroup\originalleft}
\renewcommand{\right}{\aftergroup\egroup\originalright}
\newtheorem{theorem}{Theorem}[section]
\newtheorem{corollary}[theorem]{Corollary}
\newtheorem{definition}[theorem]{Definition}
\newtheorem{proposition}[theorem]{Proposition}
\newtheorem{remark}[theorem]{Remark}
\numberwithin{equation}{section}
\title[Dynamical restriction for Schr\"odinger equations]{Dynamical restriction for Schr\"odinger equations}
\author[Fabio Nicola]{Fabio Nicola}
\address{Dipartimento di Scienze Matematiche, Politecnico di Torino, Corso Duca degli Abruzzi 24, 10129 Torino, Italy}
\email{fabio.nicola@polito.it}
\date{}
\begin{document} 
\begin{abstract}
We prove a dynamical restriction principle, asserting that every restriction estimate satisfied by the Fourier transform in $\R^d$ is also valid for the propagator of certain Schr\"odinger equations. We consider smooth Hamiltonians with an at most quadratic growth, and also a class of nonsmooth Hamiltonians, encompassing potentials that are Fourier transforms of complex (finite) Borel measures. Roughly speaking, if the initial datum belongs to $L^p(\R^d)$, for $p$ in a suitable range of exponents, the solution $u(t,\cdot)$ (for each fixed $t$, with the exception of certain particular values) can be meaningfully restricted to compact curved submanifolds of $\R^d$. The underlying property responsible for this phenomenon is the boundedness of the propagator $L^p\to(\mathcal{F}L^p)_{\rm loc}$, with $1\leq p\leq2$, which is derived from almost diagonalization and dispersive estimates in function spaces defined in terms of wave packet decompositions in phase space.
\end{abstract}

\subjclass[2010]{35Q41, 42B10, 42B35, 35S05, 35S10}
\keywords{Restriction estimates, Fourier transform, Schr\"odinger equation, Wiener amalgam spaces, Gabor wave packets}
\maketitle

\section{Introduction and discussion of the main results}

The regularizing effect of the Schrödinger propagator has been object of extensive research and underlies notable \textit{space-time} estimates, like local smoothing and Strichartz estimates (see, e.g., \cite{goldberg_schlag,tao_book}), along with \textit{fixed-time} estimates in weighted Sobolev spaces (see, e.g., \cite{doi}). The existing literature is really vast, and we do not attempt to recount the principal contributions here.

This note explores an additional instance of this regularizing effect, which, to our knowledge, has not been explicitly studied in the literature before.

Let $\mathcal{F}$ denote the Fourier transform in $\R^d$ and let $\nu$ be a positive Radon measure in $\R^d$ --- for example, a surface measure of a compact hypersurface of $\R^d$. Let $L^p(\R^d,\nu)$ denote the associated Lebesgue spaces and use $L^p=L^p(\R^d)$ for the spaces corresponding to the Lebesgue measure. 

The idea at the core of this note is highlighted by the following basic observation.
Consider the free Schr\"odinger equation in $\R\times\R^d$,
\[
i\partial_t u=-\frac{1}{2}\Delta u,
\]
and let
\[
U_0(t)f(x)=\frac{1}{(4\pi i t)^{d/2}}\int_{\R^d} e^{\frac{i|x-y|^2}{4t}}f(y)\, dy
\]
be the corresponding propagator, that is, $u(t,x)=U_0(t)f(x)$ is the solution satisfying $u(0,x)=f(x)$. By expanding the exponent of the integral kernel, considering that the pointwise multiplication by $e^{ia|x|^2}$, with $a\in\R$, is bounded in $L^p(\R^d)$ and $L^q(\R^d,\nu)$, we see that the following facts are equivalent for every $1\leq p,q\leq\infty$, $t\not=0$:
\begin{itemize}
    \item[(a)] $\mathcal{F}:L^p(\R^d)\to L^q(\R^d,\nu)$ continuously; 
    \item[(b)] $U_0(t):L^p(\R^d)\to L^q(\R^d,\nu)$ continuously. 
\end{itemize}

The investigation of conditions on $p, q, \nu$ that ensure $(a)$ is valid represents one of the main research areas in modern harmonic analysis --- the so-called \textit{restriction problem}. This issue has numerous connections with other problems and the literature on it is extensive. We refer the reader to the works \cite{tao,wolff} and the books \cite{demeter_book,stein_1993} for an in-depth exploration of this captivating subject; see also \cite{luef2024fourierrestrictionschattenclass} for a new, noncommutative perspective. In essence, the core philosophy can be summarized by stating that, even though the Fourier transform $\cF f$ of a function $f \in L^p(\rd)$, with $p > 1$, is typically not continuous, if $p$ belongs to a certain range $[1,p_0)$, $\cF f$ can still be meaningfully restricted to a \textit{curved} $k$-dimensional submanifold of $\rd$ --- that is, $\cF$ extends to a bounded operator from $L^p(\rd)$ to some Lebesgue space $L^q(\rd, \nu)$, where $\nu$ is the $k$-dimensional Hausdorff measure restricted to that submanifold. Hence, by the above observation, we see that \textit{the same happens for the solution $u(t,\cdot)=U_0(t)f$ of the free Schr\"odinger equation, with the initial datum $f\in L^p(\R^d)$, for the same range of exponents}. This exemplifies the widely recognized principle that the Schr\"odinger propagator frequently behaves like the Fourier transform (see, e.g., \cite[page 519]{wahlberg2020}).

One may wonder if a similar phenomenon persists when a potential is present. In this note, we affirmatively address this query for two classes of Hamiltonians, that is, smooth Hamiltonians with at most quadratic growth and a class of nonsmooth Hamiltonians, which includes potentials that are Fourier transforms of complex (finite) Borel measures. Hence, we provide a \textit{dynamical} version of the restriction problem, in line with recent interest in exploring dynamic versions of classical results in harmonic analysis; refer, for example, to the contributions \cite{cordero,dancona2025,malinnikova}.

The following proposition shows that the underlying property responsible for this phenomenon is an $L^p(\R^d)\to (\cF L^p)_{\rm loc}(\R^d)$ estimate, for $1\leq p\leq 2$.  
\begin{proposition}\label{pro fond}
Let $A:\cS(\R^d)\to \cS'(\R^d)$ be a linear continuous operator such that, for every function $\varphi\in C^\infty_c(\R^d)$ and $1\leq r\leq 2$, there exists a constant $C>0$ such that
\begin{equation}\label{eq 15ma}
\|\cF (\varphi A f)\|_{L^r}\leq C\|f\|_{L^r}\qquad f\in\cS(\R^d).
\end{equation}
Let $1\leq p\leq 2$, $1\leq q\leq\infty$, and let $\nu$ be a positive Radon measure in $\R^d$ with compact support, such that the Fourier transform extends to a bounded operator $L^p(\R^d)\to L^q(\R^d,\nu)$. Then, for the same $p,q,\nu$, there is a constant $C>0$ such that 
\[
\|A f\|_{L^q(\R^d,\nu)}\leq C\|f\|_{L^p(\R^d)}\qquad f\in\cS(\R^d).
\]
\end{proposition}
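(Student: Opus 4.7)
The plan is to combine a cutoff localization (exploiting that $\nu$ has compact support) with Fourier inversion, and then reduce everything to the two given ingredients: the hypothesis \eqref{eq 15ma} and the classical restriction estimate for $\cF$.

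First, since $\operatorname{supp}\nu$ is compact, I would pick $\varphi\in C^\infty_c(\R^d)$ with $\varphi\equiv 1$ on a neighborhood of $\operatorname{supp}\nu$. Then $Af$ and $\varphi Af$ agree on $\operatorname{supp}\nu$, and in particular
\[
\|Af\|_{L^q(\R^d,\nu)}=\|\varphi Af\|_{L^q(\R^d,\nu)}.
\]
This reduces matters to estimating $\varphi Af$. The advantage is that $\varphi Af$ is a \emph{compactly supported} distribution in $\mathcal{E}'(\R^d)$ for every $f\in\cS(\R^d)$.

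Next, I would upgrade $\varphi Af$ to an honest continuous function via Fourier inversion. Applying \eqref{eq 15ma} with $r=1$ gives $\cF(\varphi Af)\in L^1(\R^d)$, so the formula $\varphi Af=\cF^{-1}\cF(\varphi Af)$ makes pointwise sense and in fact $\varphi Af$ is continuous and bounded. On the other hand, applying \eqref{eq 15ma} with $r=p$ (where $1\le p\le 2$) yields $\cF(\varphi Af)\in L^p(\R^d)$ with
\[
\|\cF(\varphi Af)\|_{L^p}\le C\|f\|_{L^p}.
\]

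Finally, I would transfer the restriction hypothesis from $\cF$ to $\cF^{-1}$. Since $\cF^{-1}h(\xi)$ is, up to a constant, $\cF h(-\xi)$, the boundedness of $\cF:L^p(\R^d)\to L^q(\R^d,\nu)$ is equivalent (via the isometric change of variables $y\mapsto -y$ on $L^p$) to the boundedness of $\cF^{-1}:L^p(\R^d)\to L^q(\R^d,\nu)$. Applying this to $g=\cF(\varphi Af)\in L^p$ and chaining the estimates,
\[
\|\varphi Af\|_{L^q(\R^d,\nu)}=\|\cF^{-1}g\|_{L^q(\R^d,\nu)}\le C\|g\|_{L^p}\le C'\|f\|_{L^p},
\]
which combined with the first step is the conclusion.

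There is no genuinely hard step here; the only point that deserves care is the justification of Fourier inversion for $\varphi Af$, which is a priori only a compactly supported distribution. This is handled cleanly by using \eqref{eq 15ma} with $r=1$ to place $\cF(\varphi Af)$ in $L^1$, after which $\varphi Af$ becomes a continuous function and the pointwise restriction to $\operatorname{supp}\nu$ is meaningful. Everything else is a one-line application of the hypotheses.
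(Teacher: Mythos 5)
Your proof is correct and follows essentially the same route as the paper, whose entire argument is the one-liner ``write $\varphi A=\cF^{-1}\cF\varphi A$ with $\varphi=1$ on the support of $\nu$.'' The extra details you supply (the $r=1$ case to make $\varphi Af$ a continuous function, and the reflection argument transferring the restriction bound from $\cF$ to $\cF^{-1}$) are exactly the right justifications for that one line.
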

The proof is immediate: write $\varphi A=\cF^{-1} \cF \varphi A$, with $\varphi=1$ on the support of $\nu$.
This observation motivates the following definition.
\begin{definition}\label{def rest}
    We say that a linear continuous operator $A:\cS(\rd)\to\cS'(\rd)$ has the restriction property if for every $\varphi\in C^\infty_c(\R^d)$ and $r\in [1,2]$, the inequality \eqref{eq 15ma} holds for some $C>0$.
\end{definition}

Hence, we will focus on the problem of proving the restriction property for the propagator of the aforementioned Schr\"oedinger equations.

\subsection{Smooth Hamiltonians} \label{sec smooth}
Consider a real-valued function $a(t,x,\xi)$, $t\in\R$, $x,\xi\in\rd$, satisfying the following conditions. 
\begin{itemize}
\item[(i)] The map $\R\ni t\mapsto a(t,x,\xi)$ is continuous for every fixed $x,\xi\in\rd$;
\item[(ii)] For every $t\in \R$, $a(t,x,\xi)$ is smooth with respect to $x,\xi$ and, for every $T>0$, $\alpha,\beta\in\bN^{d}$ with $|\alpha|+|\beta|\geq 2$ there exist $C_{\alpha,\beta}>0$ such that 
\[
|\partial_x^\alpha\partial_\xi^\beta a(t,x,\xi)|\leq C_{\alpha,\beta}\qquad |\alpha|+|\beta|\geq 2,\quad x,\xi\in\rd,\ |t|\leq T. 
\]
\end{itemize}
Let $\chi_{t,s}$, $s,t\in\R$, be its associated  Hamiltonian flow, i.e. $z(t):=\chi_{t,s}(z_0)$, $z_0\in\rdd$, is the solution of the  equation \[
\dot{z}(t)=J\nabla_z a(t,z(t)),
\]
with 
\[
J=\begin{pmatrix}
0& I\\
-I& 0
\end{pmatrix},
\]
satisfying the initial condition $z(s)=z_0$. 

Consider the Cauchy problem in $\R\times\rd$:
\begin{equation}\label{eq cauchy}
\begin{cases}
    i \partial_t u=a^w(t) u\\
    u(s)=f
\end{cases}
\end{equation}
where $a^w(t)$ stands for the Weyl quantization of $a(t,\cdot)$. This problem is forward and backward globally wellposed in $L^2(\R^d)$ (see \cite[Proposition 7.1]{tataru}) and we will denote by $U(t,s)$ the corresponding propagator, that is,  $u(t)=U(t,s)f$ is the solution to \eqref{eq cauchy}. We write $(x^{t,s},\xi^{t,s})=\chi_{t,s}(y,\eta)$. Let 
\[
C(t,s):=\inf_{y,\eta\in\rdd}\Big|\det\frac{\partial x^{t,s}}{\partial \eta}(y,\eta)\Big|.
\]

\begin{theorem}\label{thm thm2}
For every $T>0$, $\varphi\in\cS(\R^d)$ and $p\in [1,2]$, there is a constant $C_0>0$ such that, for every $t,s\in [-T,T]$ such that $C(t,s)>0$, 
\begin{equation}\label{eq stima sol}
\|\cF(\varphi U(t,s)f)\|_{L^p}\leq C_0 C(t,s)^{-(\frac{2}{p}-1)} \|f\|_{L^p}\qquad f\in\cS(\rd).
\end{equation}
In particular, for such $t,s$', $U(t,s)$ has the restriction property. 
\end{theorem}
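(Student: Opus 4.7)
The plan is to interpolate between the endpoints $p=1$ and $p=2$, viewing $\cF\varphi U(t,s)$ as an operator on $L^p(\rd)$. The case $p=2$ is immediate: by Plancherel and the $L^2$-unitarity of $U(t,s)$, one has
\[
\|\cF(\varphi U(t,s)f)\|_{L^2}=\|\varphi U(t,s)f\|_{L^2}\leq\|\varphi\|_{\infty}\|f\|_{L^2},
\]
with constant independent of $C(t,s)$, matching the exponent $\frac{2}{2}-1=0$.

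All the work sits at the dispersive endpoint $p=1$, where the target is $\|\cF(\varphi U(t,s)f)\|_{L^1}\lesssim C(t,s)^{-1}\|f\|_{L^1}$. My plan is to translate both sides into mixed-norm conditions on the short-time Fourier transform $V_g$ with a Gaussian window $g\in\cS(\rd)$. The Wiener-amalgam facts I would exploit are the output control $\|\cF(\varphi u)\|_{L^1}\lesssim\|V_g u\|_{L^{\infty}_y L^{1}_\omega}$ (using the rapid decay of $\varphi$) and the input embedding $\|V_g f\|_{L^{1}_y L^{\infty}_\omega}\lesssim\|f\|_{L^1}$ (a direct STFT computation). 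The $p=1$ estimate thus reduces to the dispersive inequality
\[
\|V_g(U(t,s)f)\|_{L^{\infty}_y L^{1}_\omega}\lesssim C(t,s)^{-1}\|V_g f\|_{L^{1}_y L^{\infty}_\omega}.
\]

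To establish this, I would invoke the almost-diagonalization of $U(t,s)$ along Gabor wave packets, a standard consequence of (i)--(ii): for some $H\in\cS(\rdd)$ uniform in $|t|,|s|\leq T$,
\[
|V_g(U(t,s)f)(z)|\leq(H\ast|V_g f\circ\chi_{s,t}|)(z),\qquad z\in\rdd,
\]
typically obtained by combining a short-time pseudodifferential approximation with the group property of $U(t,s)$. Writing $\chi_{s,t}(y,\omega)=(Y(y,\omega),\Omega(y,\omega))$, symplecticity forces the full Jacobian to equal one, yet the two mixed norms are anisotropic: at fixed $y$, the partial change of variables $\omega\mapsto Y(y,\omega)$ carries a Jacobian equal to $|\det\partial x^{t,s}/\partial\eta|^{-1}$ at the corresponding phase-space point, and this is uniformly controlled by $C(t,s)^{-1}$. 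Absorbing the convolution against the Schwartz kernel $H$ via Young's inequality in mixed norms then yields the dispersive bound.

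The main obstacle is this anisotropic change-of-variables step: one must verify that only the one-sided Jacobian $|\det\partial x^{t,s}/\partial\eta|^{-1}$ enters the constant, while the tails of $H$ contribute only harmlessly to the implicit constant. Once the $p=1$ endpoint is secured, Riesz--Thorin interpolation against the $p=2$ bound yields \eqref{eq stima sol} for every $p\in[1,2]$ with exactly the exponent $\frac{2}{p}-1$ on $C(t,s)$; the restriction property of $U(t,s)$ is then immediate from Proposition \ref{pro fond}.
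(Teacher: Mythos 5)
Your argument is correct and follows essentially the same route as the paper: almost diagonalization of $U(t,s)$ along Gabor wave packets (Tataru's theorem), a mixed-norm dispersive estimate in Wiener amalgam spaces whose constant comes from the anisotropic partial change of variables $\omega\mapsto Y(y,\omega)$ with Jacobian controlled by $C(t,s)$ through symplecticity, and transference back to the $\cF L^p$ bound via pointwise multiplication by $\varphi$. The only (minor) difference is one of packaging --- you prove just the $W^{\infty,1}\to W^{1,\infty}$ endpoint and run Riesz--Thorin on $L^p$ at the very end, whereas the paper proves the amalgam estimate for all $p\leq q$ and transfers for each $p$ separately; the one step you should justify explicitly is the global injectivity of $\omega\mapsto Y(y,\omega)$ needed for the change of variables, which the paper obtains from Hadamard's global inverse function theorem.
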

For example, we can combine this result with Proposition \ref{pro fond} and the Thomas-Stein restriction theorem for the $(d-1)$-dimensional sphere $S^{d-1}\subset\R^d$, with surface measure $\nu$ (see e.g. \cite{tao,wolff}), that is, the boundedness $\cF: L^{p_0}(\R^d)\to L^2(\R^d,\nu)$, with $p_0=2(d+1)/(d+3)$. Hence we see that, for all $t\in\R$ such that $C(t,0)>0$, the solution $u(t,\cdot)=U(t,0)f$ satisfies
the estimate
\[
\int_{S^{d-1}} |u(t,\cdot)|^2 d\nu\leq C_0 \|f\|_{L^p}^2,\quad f\in\cS(\R^d),\  1\leq p\leq \frac{2(d+1)}{d+3},
\]
with a constant $C_0$ depending on $t$. 
\begin{remark}
\ 
    \begin{itemize}
\item[1)] Estimate \eqref{eq stima sol} does not extend to the ``exceptional times" where $C(t,s)=0$. For example, for $t=s$, $U(s,s)$ is the identity operator, which obviously does not have any regularizing effect. Indeed, the estimate is expected to degenerate when $C(t,s)=0$. The blow-up rate $C(t,s)^{-(\frac{2}{p}-1)}$ is in general not optimal, but cannot be improved with the approach of this note (see below) --- in fact, it is optimal for a stronger estimate, which we will use as an intermediate step (see \eqref{eq stima1 bis}). Considering our primary motivation, see Proposition \ref{pro fond} above, the precise blow-up rate is a secondary concern for us.
\item[2)] For simplicity, we supposed $\varphi\in\cS(\R^d)$, although it is possible to consider a more general multiplier $\varphi$ that is not necessarily a Schwartz function (see Remark 3.2). Observe that the estimate does not hold without any multiplier, as one already sees for the free Schr\"odinger equation, whose propagator $U(t)$ fails to be continuous $L^p(\R^d)\to\cF L^p(\R^d)$ if $p\not=2$ and $t\not=0$.
\item[3)] The above result is, notably,
of a global nature --- it holds ``beyond caustics" (the points where $C(t,s)=0$). 

\item[4)] In contrast to the Strichartz and local smoothing estimates (see, e.g., \cite{tao_book}), the previous result does not \textit{not} involve time averaging.
\end{itemize}
\end{remark}

\subsection{Nonsmooth Hamiltonians}\label{sec nonsmooth} 

We now consider the same problem for certain nonsmooth Hamiltonians. While we defer a more thorough exploration of this topic for later, here we focus on the simple yet nontrivial instance of a time-independent Hamiltonian of the form
\[
a(x,\xi)=a_2(x,\xi)+a_1(x,\xi)+a_0(x,\xi),
\]
where 
\begin{itemize}
    \item[(iii)]
$a_2(x,\xi)$ is a real-valued quadratic form in $\R^d\times\R^d$ (that is, a second degree real-valued homogeneous polynomial);
\item[(iv)] $a_1\in C^\infty(\rdd)$ is a real-valued function satisfying
\[
|\partial^\alpha_x\partial^\beta_\xi a_1(x,\xi)|\leq C_{\alpha,\beta}\qquad |\alpha|+|\beta|
\geq 1,\quad x,\xi\in\rd;
\]
\item[(v)]
\[
a_0\in M^{\infty,1}(\rdd).
\]
\end{itemize} 
The space $M^{\infty,1}(\rdd)$ is the so-called Sj\" ostrand class (after the work \cite{sjostrand} on the Wiener property of pseudodifferential operators; see also \cite{grochenig,lerner_book}). Its definition will be recalled in Section \ref{sec notation}. To get a sense of this space, the reader is notified that the functions in $M^{\infty,1}(\rd)$ are bounded and have local $\cF L^1$ regularity (uniformly with respect to translations). Our interest in this space, as a class of (possibly pseudodifferential) potentials, lies in the fact that if $V:\R^d\to\mathbb{C}$ is the Fourier transform of a complex (finite) Borel measure, then $a_0(x,\xi):=V(x)$ belongs to $M^{\infty,1}(\R^{2d})$ (\cite{nicola_trapasso}). This class of potentials is a popular choice in several contexts, for example, in the literature about the rigorous construction of the Feynman path integral (see, e.g., \cite{albeverio, ito} and also \cite{nicola_book} for a phase-space approach). 

Let $Sp(d,\R)$ be the group of real $2d\times 2d$ symplectic matrices.   Consider the Hamiltonian flow associated with the quadratic Hamiltonian $a_2(x,\xi)$, that is \[
 \R\ni t\mapsto S_t=
\begin{pmatrix} A_t& B_t\\
C_t& D_t
\end{pmatrix}\in Sp(d,\R),
\]
and let $U(t)=e^{-ta^w}$ be the Schr\"odinger propagator associated with the Hamiltonian $a=a_2+a_1+a_0$, with $a_0,a_1,a_2$ as above. 
\begin{theorem}\label{thm thm3}
Consider a Hamiltonian $a(x,\xi)=a_2(x,\xi)+a_1(x,\xi)+a_0(x,\xi)$, with $a_0, a_1, a_2$ as above. For every $T>0$, $\varphi\in\cS(\R^d)$ and $p\in [1,2]$, there exists a constant $C_0>0$ such that, for every $t\in [-T,T]$ such that ${\rm det}(B_t)\not=0$,
\begin{equation}\label{eq stima sol bis}
\|\cF(\varphi U(t)f)\|_{L^p}\leq C_0 ({\rm det}(B_t))^{-(\frac{2}{p}-1)} \|f\|_{L^p}\qquad f\in\cS(\rd).
\end{equation}  
In particular, for such $t$'s, $U(t)$ enjoys the restriction property. 
\end{theorem}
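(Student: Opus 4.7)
The plan is to reduce the argument to Theorem \ref{thm thm2} combined with a perturbative treatment of the nonsmooth piece $a_0\in M^{\infty,1}$. First, I would isolate the metaplectic propagator $\tilde V(t):=e^{-ita_2^w}$ associated with the purely quadratic Hamiltonian $a_2$, and write
\[
U(t)=\tilde V(t)\,W(t),
\]
where $W(t)$ is the ``interaction-picture'' propagator. By the symplectic covariance of the Weyl calculus, $\tilde V(t)^{-1}(a_1+a_0)^w\tilde V(t)=b(t)^w$ with $b(t)=(a_1+a_0)\circ S_t$, so $W(t)$ solves $i\partial_t W(t)=b(t)^w W(t)$, $W(0)=I$. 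Since $M^{\infty,1}(\rdd)$ is invariant under linear symplectic substitutions and, after suitable localization, contains $a_1$ by virtue of its bounded derivatives of all orders $\geq 1$, the family $\{b(t)\}_{|t|\leq T}$ is bounded in $M^{\infty,1}$. By the Sj\"ostrand-type boundedness of $M^{\infty,1}$-pseudodifferential operators on modulation spaces, combined with a Dyson-series expansion, $W(t)$ is uniformly bounded on every modulation space $M^{p,q}(\rd)$ for $|t|\leq T$.

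Second, I would analyse $\tilde V(t)$ directly, mimicking the argument used for the free propagator in the Introduction. When $\det B_t\neq 0$, $\tilde V(t)$ is a quadratic-phase (``chirp'') integral operator: up to a factor $|\det B_t|^{-1/2}$, it decomposes as multiplication by a chirp $e^{i\Phi_1(x)}$, a linear change of variables with Jacobian $|\det B_t|^{-1}$, the Fourier transform, and a second chirp multiplication $e^{i\Phi_2(y)}$. Because multiplication by a unimodular chirp is bounded on $L^p$ and converts to Schwartz convolution on the Fourier side, the very same equivalence (a)$\Longleftrightarrow$(b) of the Introduction carries over and yields
\[
\|\cF(\varphi\tilde V(t)g)\|_{L^p}\leq C|\det B_t|^{-(\frac{2}{p}-1)}\|g\|_{L^p}
\]
for any $\varphi\in\cS(\rd)$, the $p$-dependence of the prefactor emerging from the $L^p$-dilation cost of the substitution combined with Hausdorff--Young.

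Combining the two ingredients, I would take $g=W(t)f$ and route the composition through a common intermediate space --- a modulation or Wiener amalgam space in which $L^p$ embeds, on which $W(t)$ is uniformly bounded, and which is carried into $(\cF L^p)_{\mathrm{loc}}$ by the chirp representation of $\tilde V(t)$. The endpoint $p=2$ is trivial from $L^2$-unitarity and Plancherel, the endpoint $p=1$ follows from a dispersive/wave-packet bound together with the modulation-space action of $W(t)$, and complex interpolation fills in the intermediate range; the restriction property then follows exactly as in Theorem \ref{thm thm2} by invoking Proposition \ref{pro fond}.

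The main obstacle I expect is matching the Gabor/modulation-space boundedness of $W(t)$ with the chirp analysis of $\tilde V(t)$, while preserving the sharp blow-up rate $|\det B_t|^{-(2/p-1)}$ uniformly in $|t|\leq T$. Because $(\cF L^p)_{\mathrm{loc}}$ is not itself a modulation space, the choice of the intermediate space and the careful tracking of constants along the composition are delicate; this is precisely where the almost-diagonalization and phase-space dispersive estimates alluded to in the abstract are expected to play their essential role.
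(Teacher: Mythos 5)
There is a genuine gap in your treatment of the first-order term $a_1$. You take the interaction picture with respect to the metaplectic propagator $e^{-ita_2^w}$ alone and treat $a_1+a_0$ as the perturbation, claiming that $b(t)=(a_1+a_0)\circ S_t$ is bounded in $M^{\infty,1}(\rdd)$ because $a_1$ has bounded derivatives of all orders $\geq 1$. But assumption (iv) controls only the derivatives of $a_1$ of order $\geq 1$, not $a_1$ itself: a symbol such as $a_1(x,\xi)=x_1$ is admissible, grows linearly, and therefore does not belong to $M^{\infty,1}(\rdd)$ (whose elements are bounded); correspondingly $a_1^w$ is not a bounded operator on $L^2(\R^d)$, and no ``localization'' can repair this, since the Dyson series for $W(t)$ requires a globally bounded perturbation in the Banach algebra of $M^{\infty,1}$ Weyl operators. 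The paper's proof is organized precisely to avoid this trap: it groups $a_1$ with $a_2$, invokes Tataru's almost diagonalization theorem for the smooth propagator $U_1(t)=e^{-it(a_2+a_1)^w}$ relative to the flow $\chi_t$ of $a_2+a_1$, shows via Gronwall that $|\chi_t(z)-S_tz|$ is bounded uniformly in $z$ (so the envelope can be recentred at $S_tz$), and only then runs the Dyson series with the single nonsmooth, genuinely bounded perturbation $a_0\in M^{\infty,1}$, using the composition estimate of Proposition \ref{pro composiz}. The paper even remarks explicitly that the symplectic-covariance shortcut you rely on ``is no longer a priori guaranteed if $a_2$ is replaced by $a_2+a_1$''; the same objection applies, in mirror form, to placing $a_1$ on the perturbation side.

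A secondary weakness is that your final composition step is left unresolved: you correctly identify that one must route $\tilde V(t)\circ W(t)$ through a common intermediate space, but you do not choose one. The paper's resolution is to prove a single almost diagonalization estimate $|\langle U(t)\pi(z)g,\pi(w)g\rangle|\leq H_t(w-S_tz)$ for the full propagator (Theorem \ref{pro 8mag}), feed it into Theorem \ref{thm thm1} to get the dispersive bound $W^{p',p}\to W^{p,p'}$ with the factor $(\det B_t)^{-(\frac1p-\frac1{p'})}$, and conclude by the transference principle of Proposition \ref{pro1.1}. Your chirp factorization of $\tilde V(t)$ is a legitimate alternative for the quadratic part (the paper cites works deriving the amalgam-space dispersive estimates exactly this way), but without the almost diagonalization of the full $U(t)$, or at least a uniform $W^{p',p}\to W^{p',p}$ bound for $W(t)$, the argument does not close.
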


\subsection{Strategy of the proof} Let us briefly describe the strategy of the proof of  Theorems \ref{thm thm2} and \ref{thm thm3}. First, we prove a transference principle (Proposition \ref{pro1.1}), asserting that the restriction property holds for any operator $A$ that satisfies a refined Hausdorff-Young --- alias dispersive --- inequality, namely $A: W^{p',p}(\R^d)\to W^{p,p'}(\R^d)$, $1\leq p\leq2$, where the spaces $W^{p,q}(\R^d)$ are the so-called \textit{Wiener amalgam spaces}. They are defined in terms of wave packet decompositions in phase space and were first introduced by Feichtinger in \cite{feichtinger} (their definition and properties will be recalled in Section \ref{sec notation} below). Hence, we are led to study the continuity $W^{p',p}(\R^d)\to W^{p,p'}(\R^d)$, $1\leq p\leq 2$, of the above propagators (cf. \eqref{eq stima1 bis}). This will be accomplished by exploiting certain almost diagonalization estimates with respect to wave packets, proved in \cite{CNR_JMP,grochenig,tataru}. In summary, we have the following implications, each of which appears to be of independent interest. 
\[
    \textit{Almost diagonalization with respect to Gabor wave packets }
    \]
    \[
    \Downarrow
    \]
    \[
    \textit{Dispersive estimates in Wiener amalgam spaces}
    \]
    \[
    \Downarrow
    \]
    \[
    \textit{Restriction estimates (for Schr\"odinger)}
    \]
    \par\medskip\noindent
We observe that dispersive estimates in Wiener amalgam spaces, in the simpler case of a quadratic Hamiltonian, have already been obtained (using the explicit formula for the solution) in \cite{CN2008,CN_nach,kato2012,kato2019,toft2023,zhao2024}. 

 In Section \ref{sec micro} we will prove  a microlocal version of the above results, for smooth Hamiltonians of the form $a_2(x,\xi)+a_1(x,\xi)$, with $a_2$ and $a_1$ as in Section \ref{sec nonsmooth}, in terms of the so-called \textit{global} (alias Gabor) wave-front set \cite{hormander,trapasso_rodino,rodino_wahlberg,schulz_wahlberg} (see also \cite{oliaro} for an analytic version). In fact, the almost diagonalization estimates mentioned earlier can be seen as a type of propagation of singularities, offering a phase-space-based explanation for the above regularization effect. 

 We restricted our focus to Hamiltonians of the type described in Sections \ref{sec smooth} and \ref{sec nonsmooth} to illustrate the general approach while avoiding heavy technicalities, but the method of this note can similarly be used to address other Hamiltonians, as those considered in \cite{yajima1991}, including smooth magnetic potentials with at most linear growth, as well as other dispersive equations.

\section{Notation and preliminaries}\label{sec notation}
\subsection{Notation} We denote by $C^\infty_c(\R^d)$ the space of smooth functions with compact support in $\R^d$ and by $\cS(\R^d)$ the space of Schwartz functions, along with the space $\cS'(\R^d)$ of temperate distributions. As in the Introduction, $\cF$ denotes the Fourier transform in $\R^d$, normalized as 
\[
\mathcal{F} f(\xi)=\int_{\R^d} e^{-ix\cdot \xi} f(x)\, dx,\qquad \xi\in\R^d.  
\]
We write $\cF L^p(\R^d)$ for the space of temperate distributions whose (inverse) Fourier transform belongs to $L^p(\R^d)$, equipped with the obvious norm. Also, the notation $(\cF L^p)_{\rm loc}(\R^d)$ stands for the space of all $f\in\cS'(\R^d)$ such that $\varphi f\in \cF L^p(\R^d)$, for every $\varphi\in C^\infty_c(\R^d)$, equipped with the obvious family of seminorms.  

\subsection{Modulation and Wiener amalgam spaces}
For $x,\xi\in\R^d$, $g\in L^2(\R^d)$, define the phase-space shift $\pi(x,\xi)$ by
\begin{equation}\label{eq shift}
\pi(x,\xi) g(y)=e^{i y\cdot \xi}g(y-x)\qquad y\in\rd.
\end{equation}
If $g\in \cS(\rd)\setminus\{0\}$, the functions $\pi(x,\xi) g$ are sometimes called Gabor wave packets, or Gabor atoms, or even coherent states \cite{grochenig_book,lieb_loss_book}.

Given $g\in\cS(\rd)\setminus\{0\}$, we define the short-time Fourier transform of $f\in \cS'(\rd)$ by
\[
V_gf(z)=\langle f,\pi(z)g\rangle \qquad z\in\rdd.
\]
One can verify that, if $\|g\|_{L^2}=(2\pi)^{-d/2}$, $V_g: L^2(\rd)\to L^2(\rdd)$ is a (not onto) isometry,  i.e., $V_g^\ast V_g= I$.

Then, the modulation spaces $M^{p,q}(\rd)$, $1\leq p,q\leq\infty$ (cf. \cite{benyi_book,feichtinger,grochenig_book}), are defined as the set of temperate distributions $f\in \cS'(\rd)$ such that
\[
\|f\|_{M^{p,q}}:=\Big(\int_{\rd} \|V_g f(\cdot,\xi)\|^q_{L^p}\, d\xi\Big)^{1/q}<\infty
\]
(with obvious changes if $q=\infty$). Different $g$'s give rise to equivalent norms. 

Similarly, we consider their Fourier images $W^{p,q}(\rd)=\cF M^{p,q}(\rd)$, $1\leq p,q\leq\infty$, also called Wiener amalgam spaces and often denoted by $W(\cF L^p,L^q)$ in the literature, which can be equivalently defined by requiring that 
\[
\|f\|_{W^{p,q}}:=\Big(\int_{\rd} \|V_g f(x,\cdot)\|^q_{L^p}\, dx\Big)^{1/q}<\infty
\]
(with obvious changes if $q=\infty$). 

We observe that for $p=q$, $M^{p,p}(\rd) = W^{p,p}(\rd)$ and for $p=q=2$, $M^{2,2}(\rd)=W^{2,2}(\rd)=L^2(\rd)$. Moreover, the reader should keep in mind that both $M^{q,p}(\rd)$ and $W^{p,q}(\rd)$ are constituted by (generalized) functions with local $\cF L^p$ regularity and $L^q$ decay at infinity. This makes the following properties intuitively clear (see the original papers \cite{feichtinger, toft} or the books \cite{benyi_book,cordero_book,nicola_book} for the proofs).

\textit{Embeddings.} If $1\leq p_1,p_2,q_1,q_2\leq \infty$, $W^{p_1,q_1}(\rd)\subset W^{p_2,q_2}(\rd)$ if and only if $p_1\leq p_2$ and $q_1\leq q_2$. Moreover if $1\leq p\leq 2$, $L^p(\rd)\subset W^{p',p}(\rd)$ and $W^{p,p}(\rd)\subset L^p(\rd)$. All these inclusions are, in fact, continuous embeddings.  

\textit{Convolution.} If $1\leq p_1,p_2,q_1,q_2,p,q\leq \infty$, and 
\[
\frac{1}{p_1}+\frac{1}{p_2}=\frac{1}{p},\qquad 
\frac{1}{q_1}+\frac{1}{q_2}=\frac{1}{q}+1,
\]
we have 
\[
\|f\ast g\|_{W^{p,q}}\leq C \|f\|_{W^{p_1,q_1}} \|g\|_{W^{p_2,q_2}}.
\]

\textit{Pointwise multiplication.}
If $1\leq p_1,p_2,q_1,q_2,p,q\leq \infty$, and \[
\frac{1}{p_1}+\frac{1}{p_2}=\frac{1}{p}+1,\qquad \frac{1}{q_1}+\frac{1}{q_2}=\frac{1}{q},
\]
we have 
\[
\|f g\|_{W^{p,q}}\leq C \|f\|_{W^{p_1,q_1}} \|g\|_{W^{p_2,q_2}}.
\]

\subsection{Pseudodifferential operators}\label{sec pseudo}
We define the Weyl quantization of a temperate distribution $a(x,\xi)$ in $\R^{2d}$ formally as 
\[
a^w f(x)=(2\pi)^{-d}\int_{\rdd} e^{ i  (x-y)\cdot \xi} a\Big(\frac{x+y}{2},\xi\Big) f(y)\, dy d\xi \qquad f\in\cS(\R^d).
\]
For our purposes, the relevant classes of symbols will be the H\"ormander class $S^0_{0,0}(\rdd)$, constituted by the smooth functions $a\in C^\infty(\rdd)$ such that $\partial^\alpha a\in L^\infty(\rd)$ for every multi-index $\alpha \in\bN^{2d}$, and the class $M^{\infty,1}(\rdd)$ --- the so-called Sj\"ostrand class, which represents the natural symbol class for a noncommutative version of the celebrated Wiener theorem \cite{sjostrand}. 

According to our previous discussion, the functions in $M^{\infty,1}(\rdd)$ are bounded and have a local $\cF L^1$ regularity. Moreover, we have $S^0_{0,0}(\rdd)\subset M^{\infty,1}(\rdd)$.

\textit{Almost diagonalization.} The issue of the almost diagonalization of pseudodiffential and Fourier integral operators has a long tradition that dates back at least to the pioneering work  \cite{fefferman,grochenig,meyer,sjostrand,tataru}. We do not attempt to list the huge number of subsequent contributions, but we address the interested reader to the books \cite{cordero_book,muscalu_book,nicola_book}, the survey \cite{trapasso_ad} and the references therein.

For future reference, it is important to recall that the above symbol classes can be characterized in terms of almost diagonalization estimates for the corresponding Weyl operators. 
\begin{itemize}
    \item[(a)] 
If $a\in \cS'(\rdd)$ then $a\in M^{\infty,1}(\rdd)$ if and only if $a^w$ satisfies the estimate 
\begin{equation}\label{eq almostdiag zero}
    |\langle a^w\pi(z) g,\pi(w)g\rangle |
    \leq H(w-z)\qquad z,w\in\rdd
\end{equation}
for some $H\in L^1(\rdd)$ and some (and therefore, every) $g\in\cS(\rd)\setminus\{0\}$ \cite{grochenig}.
\item[(b)]
If $a\in \cS'(\rdd)$ then $a\in S^0_{0,0}(\rdd)$ if and only if 
\begin{equation}
    |\langle a^w\pi(z) g,\pi(w)g\rangle |
    \leq C_N(1+|w-z|)^{-N}\qquad z,w\in\rdd
\end{equation}
for every $N\in\bN$ and some (and therefore, every) $g\in\cS(\rd)\setminus\{0\}$ \cite{tataru} (that is, if \eqref{eq almostdiag zero} holds with $H(z)=C_N(1+|z|)^{-N}$ for every $N\in\bN$).
\end{itemize}
These characterizations imply that such symbol classes are closed with respect to the Weyl product (see below) and also imply at once corresponding continuity properties. 

\textit{Continuity.} 
If $a\in M^{\infty,1}(\rdd)$ then $a^w: M^{p,q}(\rd)\to M^{p,q}(\rd)$ continuously, for $1\leq p,q\leq\infty$ \cite{grochenig} (in particular $a^w: L^2(\rd)\to L^2(\rd)$). This is also an immediate consequence of the above characterization (a) and the formula \eqref{eq lift} below, using Young's inequality for mixed norm spaces \[
L^1(\rdd)\ast L^q(\rd; L^p(\rd))\subset L^q(\rd; L^p(\rd)).
\]
The same argument shows that if $a\in M^{\infty,1}(\rdd)$ then $a^w: W^{p,q}(\rd)\to W^{p,q}(\rd)$ continuously, for $1\leq p,q\leq\infty$. 

We will also make use of the following Banach algebra property \cite{grochenig,sjostrand}. 
\begin{itemize}
    \item[(c)] The set of Weyl operators $a^w$, with Weyl symbol $a\in M^{\infty,1}(\R^{2d})$, endowed with the norm $\|a^w\|:=\|a\|_{M^{\infty,1}}$, is a Banach algebra of bounded operators in $L^2(\R^d)$. An equivalent norm is given by 
    \begin{equation}\label{eq equiv norm}
|||a^w|||:=\int_{\R^{2d}}\sup_{w\in\R^{2d}}|\langle a^w \pi(z+w)g,\pi(w)g\rangle|\, dz,
    \end{equation}
    for any $g\in\cS(\R^d)\setminus\{0\}$ (cf. item (a) above). That is, for $a\in\cS'(\R^{2d})$, 
    \[
    C^{-1} \|a\|_{M^{\infty,1}}\leq |||a^w|||\leq C \|a\|_{M^{\infty,1}}
    \]
    for some constant $C>0$. 

\end{itemize}

Incidentally, we observe that lower bounds for pseudodifferential operators for these symbol classes have also been studied; for example, the classical Fefferman-Phong inequality is known to hold for a nonnegative symbol with fourth derivatives in $M^{\infty,1}(\rdd)$ \cite{lerner_book}.

\section{A transference principle}
In this section we show that any operator satisfying a refined form of the Hausdorff-Young inequality enjoys the restriction property (see Definition \ref{def rest}).

The classical Hausdorff-Young inequality asserts that the Fourier transform extends to a bounded operator $L^p(\rd)\to L^{p'}(\rd)$, for $1\leq p\leq 2$, $\frac{1}{p}+\frac{1}{p'}=1$. In fact, Feichtinger proved (in the more general context of locally compact Abelian groups) that, for $1\leq p\leq q\leq\infty$, $\cF:W^{q,p}(\rd)\to W^{p,q}(\rd)$ continuously \cite{feichtinger_inter}. Since $L^p(\rd)\hookrightarrow W^{p',p}(\rd)$ and $W^{p,p'}(\rd)\hookrightarrow L^{p'}(\rd)$ for $1\leq p\leq 2$, this result represents an extension and an improvement of the Hausdorff-Young inequality. 

We can state our first result.
\begin{proposition}[Transference principle]\label{pro1.1}
Let $1\leq p\leq 2$ and $\varphi\in \cS(\rd)$. There is a constant $C>0$ such that, for every bounded operator $A:W^{p',p}(\rd)\to W^{p,p'}(\rd)$, we have 
\begin{equation}\label{eq cont}
\|\mathcal{F}(\varphi Af)\|_{L^p}\leq C \|A\|_{W^{p',p}\to W^{p,p'}} \|f\|_{L^p}\qquad f\in\cS(\rd).
\end{equation}
As a consequence, $A$ has the restriction property (see Definition \ref{def rest}).
\end{proposition}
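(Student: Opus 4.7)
The plan is to realize the map $f\mapsto \cF(\varphi A f)$, restricted to $L^p(\rd)$ with $1\le p\le 2$, as a composition of four bounded maps, each either recalled in Section \ref{sec notation} or an immediate consequence of the tools collected there and at the start of this section. First I would use the continuous embedding $L^p\hookrightarrow W^{p',p}(\rd)$ (valid for $1\le p\le 2$) to lift $f$ into a Wiener amalgam space, and then invoke the standing hypothesis $A:W^{p',p}\to W^{p,p'}$ to place $Af$ in $W^{p,p'}(\rd)$ with the correct operator-norm control.

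Next, the cutoff $\varphi$ enters through the pointwise multiplication rule. Taking source exponents $(p_1,q_1)=(p,p')$ for $Af$ and $(p_2,q_2)=(1,p)$ for $\varphi$, both admissibility conditions
\[
\tfrac{1}{p_1}+\tfrac{1}{p_2}=\tfrac{1}{p}+1,\qquad \tfrac{1}{q_1}+\tfrac{1}{q_2}=\tfrac{1}{p'}+\tfrac{1}{p}=1
\]
are met, so $\varphi A f$ lies in $W^{p,1}(\rd)$. The norm $\|\varphi\|_{W^{1,p}}$ is finite because $\varphi\in\cS(\rd)$ belongs to every Wiener amalgam space.

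Finally, I would apply Feichtinger's Hausdorff--Young inequality $\cF:W^{q,r}\to W^{r,q}$ (valid for $r\le q$), recalled at the opening of this section, specialized with $(r,q)=(1,p)$: since $1\le p$, it gives $\cF:W^{p,1}\to W^{1,p}$ continuously. Chasing through the embeddings $W^{1,p}\hookrightarrow W^{p,p}\hookrightarrow L^p$, both valid for $1\le p\le 2$, finally lands $\cF(\varphi A f)$ in $L^p(\rd)$. Composing the four bounded maps yields \eqref{eq cont}, and the restriction property (Definition \ref{def rest}) follows by taking $r=p$.

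The hard part, such as it is, is pure index bookkeeping: one must check that the chosen exponents at each step remain in $[1,\infty]$, which they do throughout for $1\le p\le 2$, and that the range $[1,2]$ is used exactly where the two asymmetric embeddings $L^p\hookrightarrow W^{p',p}$ and $W^{p,p}\hookrightarrow L^p$ are invoked. Every analytic ingredient is already recorded in Section \ref{sec notation} and at the start of this section; no further input is needed.
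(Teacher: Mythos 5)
Your proof is correct and follows essentially the same route as the paper: embed $L^p\hookrightarrow W^{p',p}$, apply $A$, use the pointwise multiplication rule for $\varphi$, map through $\cF$ via its Wiener-amalgam boundedness, and embed back into $L^p$. The only (harmless) difference is the intermediate space after multiplication — you land in $W^{p,1}$ and use the full Feichtinger estimate $\cF:W^{p,1}\to W^{1,p}$, whereas the paper pairs $\varphi\in W^{1,q}$ with $\tfrac1q=\tfrac1p-\tfrac1{p'}$ to land directly in $W^{p,p}$, on which $\cF$ is bounded; both choices of exponents are admissible and yield \eqref{eq cont}.
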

\begin{proof}[Proof of Proposition \ref{pro1.1}] 
Since $1\leq p\leq 2$, we have $W^{p,p}(\rd)\hookrightarrow L^p(\rd)$ and moreover the Fourier transform is bounded in $W^{p,p}(\rd)$. Hence 
\[
\|\cF(\varphi Af)\|_{L^p}\leq C\|\varphi Af\|_{W^{p,p}}.
\]
Observe that $\varphi\in\cS(\rd)\subset W^{1,q}(\rd)$ for every $q\in[1,+\infty]$. Choosing $q$ so that $\frac{1}{q}=\frac{1}{p}-\frac{1}{p'}$, by the pointwise multiplication property of Wiener amalgam spaces (see Section \ref{sec notation}) we have
\begin{align*}
\|\varphi Af\|_{W^{p,p}}&
\leq C' \|\varphi\|_{W^{1,q}}\|Af\|_{W^{p,p'}}\\
&\leq C'' \|A\|_{W^{p',p}\to W^{p,p'}}\|f\|_{W^{p',p}}.
\end{align*}
Since $L^p(\rd)\hookrightarrow W^{p',p}(\rd)$ for $1\leq p\leq 2$, this concludes the proof of \eqref{eq cont}. 
\end{proof}
\begin{remark}\label{rem 3.2}
It follows from the above proof that \eqref{eq cont} holds true  for $\varphi\in W^{1,q}(\rd)$, with $\frac{1}{q}=\frac{1}{p}-\frac{1}{p'}$. 
\end{remark}

\section{Almost diagonalization} 

The proof of Theorems \ref{thm thm2} and \ref{thm thm3} is based on some almost diagonalization estimates from \cite{CNR_JMP,grochenig,tataru}. 
We focus on almost diagonalization with respect to Gabor wave plackets $\pi(z) g$, for $g\in \cS(\R^d)\setminus\{0\}$, $z\in\R^{2d}$, following \cite{CNGR_JMPA,muscalu_book,tataru}. 

Let $\chi:\rdd\to\rdd$ be a smooth ``tame" symplectomorphism, namely:
\begin{itemize}
    \item[(a)] $\chi$ is smooth;
    \item[(b)] We have \begin{equation}\label{eq der chi}
|\partial^\alpha \chi(z)|\leq C_\alpha \qquad z\in\rdd, \ |\alpha|\geq 1; 
\end{equation} 
    \item[(c)] $\chi$ preserves the symplectic structure of $\rdd$, i.e. with $(x,\xi)=\chi(y,\eta)$, $dx\wedge d\xi= dy\wedge d\eta$. 
\end{itemize}
 The Hamiltonian flow $\chi_{t,s}$, for every fixed $t,s\in\R$, considered in Section \ref{sec smooth}, is a typical example of a tame symplectomorphism.

We consider a linear continuous operator 
$A:\cS(\rd)\to\cS'(\rd)$ satisfying the estimate
\begin{equation}\label{eq almostdiag}
    |\langle A\pi(z) g,\pi(w)g\rangle |
    \leq H(w-\chi(z))\qquad z,w\in\rdd,
\end{equation}
for some $H\in L^1(\rdd)$, $g\in \cS(\rd)\setminus\{0\}$. 
\begin{theorem}\label{thm thm1}
Suppose that $A:\cS(\rd)\to\cS'(\rd)$ satisfies \eqref{eq almostdiag} for some $H\in L^1(\rd)$, $g\in \cS(\rd)\setminus\{0\}$, and $\chi$ as above.
Let $(x,\xi)=\chi(y,\eta)$ and suppose 
\[
B:=\inf_{y,\eta\in\rdd}\big|\det\frac{\partial x}{\partial \eta}(y,\eta)\big|>0.
\]
For $1\leq p\leq q\leq\infty$, there exists a constant $C>0$ depending only on $g$ (and the choice of the window in the definition of the spaces $W^{p,q}$) such that 
\[
\label{eq stima1}
\|Af\|_{W^{p,q}}\leq  C B^{-(\frac{1}{p}-\frac{1}{q})}\|H\|_{L^1}\|f\|_{W^{q,p}}\qquad f\in\cS(\R^d).
\]

\end{theorem}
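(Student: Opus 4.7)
The plan is to combine the STFT inversion formula with the almost diagonalization hypothesis \eqref{eq almostdiag} to recast the operator bound as a convolution estimate on $\rdd$, and then to deduce the required mixed-norm inequality using the symplectic structure of $\chi$. Writing $w=(u,v)$ with $u$ the space and $v$ the frequency variable (and analogously $z=(y,\eta)$), the reproducing identity $V_g(Af)(w)=c\int V_g f(z)\langle A\pi(z)g,\pi(w)g\rangle\,dz$ combined with \eqref{eq almostdiag} yields
\[
|V_g(Af)(w)|\leq c\int_{\rdd}|V_g f(z)|\,H(w-\chi(z))\,dz.
\]
The symplectic change of variables $z'=\chi(z)$ (with Jacobian equal to $1$) converts this into a genuine convolution on $\rdd$,
\[
|V_g(Af)(w)|\leq c\,(H\ast\tilde F)(w),\qquad \tilde F:=|V_g f|\circ\chi^{-1},
\]
and the mixed-norm Young inequality gives $\|V_g(Af)\|_{L^q_u L^p_v}\leq c\|H\|_{L^1(\rdd)}\|\tilde F\|_{L^q_u L^p_v}$. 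Since $\|Af\|_{W^{p,q}}\simeq \|V_g(Af)\|_{L^q_u L^p_v}$, the theorem reduces to the key inequality
\[
\|F\circ\chi^{-1}\|_{L^q_u L^p_v}\leq CB^{-(1/p-1/q)}\|F\|_{L^p_y L^q_\eta},\qquad 1\leq p\leq q\leq\infty,
\]
for nonnegative measurable $F$ on $\rdd$.

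I intend to prove this estimate by Riesz--Thorin interpolation of mixed-norm Lebesgue spaces between two endpoints. At the diagonal $p=q$ both sides coincide with $\|F\|_{L^p(\rdd)}$ because $\chi$ preserves Lebesgue measure, so the bound holds with constant $1$ and no power of $B$. At the antidiagonal corner $p=1$, $q=\infty$, I fix $u$ and change variable $v\mapsto y$ inside $\int|F(\chi^{-1}(u,v))|\,dv$, parametrizing the fiber $\{x(y,\eta)=u\}$ by $y$. The standard formula for the inverse of a symplectic matrix,
\[
(D\chi)^{-1}=\begin{pmatrix}(\partial \xi/\partial \eta)^T & -(\partial x/\partial \eta)^T \\ -(\partial \xi/\partial y)^T & (\partial x/\partial y)^T\end{pmatrix},
\]
lets me read off $\partial y/\partial v\big|_u=-(\partial x/\partial \eta)^T$ and hence $|\det(\partial y/\partial v)|\geq B$, so that
\[
\int_{\rd}|F(\chi^{-1}(u,v))|\,dv\leq B^{-1}\int_{\rd}|F(y,\eta(u,y))|\,dy\leq B^{-1}\|F\|_{L^1_y L^\infty_\eta}.
\]
Taking the supremum in $u$ yields the endpoint with constant $B^{-1}$, and Riesz--Thorin interpolation with $\theta=1/p-1/q$ then produces the intermediate cases with the advertised exponent $B^{-(1/p-1/q)}$.

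The delicate point of the argument will be the $(1,\infty)$ endpoint: beyond the Jacobian identity above, the change of variables $v\mapsto y$ requires that $\Psi_u(v):=y(u,v)$ be a global diffeomorphism of $\rd$. I expect this to follow from Hadamard's global inverse function theorem applied in the target of $\chi^{-1}$: the tame hypothesis \eqref{eq der chi} combined with $|\det D\chi|=1$ forces $|D\chi^{-1}|$ to be uniformly bounded, so $\Psi_u$ is Lipschitz; the lower bound $|\det D\Psi_u|=|\det(\partial x/\partial \eta)|\geq B>0$ together with this Lipschitz bound then forces the smallest singular value of $D\Psi_u$ to be uniformly positive, making $\Psi_u$ a proper local diffeomorphism of $\rd$, hence a global one.
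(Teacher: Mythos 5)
Your proposal is correct and follows essentially the same route as the paper: STFT inversion plus \eqref{eq almostdiag} to get a convolution bound, Young's inequality for mixed norms, and reduction to the change-of-variables estimate $\|F\circ\chi^{-1}\|_{L^q_x(L^p_\xi)}\leq B^{-(1/p-1/q)}\|F\|_{L^p_y(L^q_\eta)}$, proved via the diagonal case, an off-diagonal endpoint with Jacobian $\partial y/\partial\xi=-(\partial x/\partial\eta)^T$ justified by Hadamard's global inverse function theorem, and interpolation. The only (immaterial) difference is that you interpolate from the single corner $(1,\infty)$ while the paper establishes the whole family $(p,\infty)$ directly — which is just your endpoint applied to $|F|^p$.
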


\begin{proof}
We can suppose that the function $g\in\cS(\rdd)\setminus\{0\}$ in \eqref{eq almostdiag} is normalized so that $\|g\|_{L^2}=(2\pi)^{-d/2}$. Moreover, we will use $g$ as a window in the definition of the spaces $W^{p,q}(\R^d)$ (see Section \ref{sec notation}). 

 We use the inversion formula $f=V_g^\ast V_g f$:
\[
f(y)=\int_{\rdd} V_g f(z)\pi(z) g (y)\,dz\qquad y\in\R^d.
\]
Applying the operator $A$ we have 
\begin{equation}\label{eq lift}
   V_g(Af)(w)=\int_{\rdd}\langle A\pi(z)g,\pi(w)g\rangle V_gf(z)\, dz. 
   \end{equation}
   Using \eqref{eq almostdiag} we see that 
\[
|V_g(Af)(w)|\leq \int_{\rdd} H(w-\chi(z))|V_g f(z)|\, dz. 
\]
Since $\chi:\rdd\to\rdd$ is a symplectomorphism, its derivative has determinant $1$, so that
\[
|V_g(Af)(w)|\leq \int_{\rdd} H(w-z)|V_g f(\chi^{-1}(z))|\, dz. 
\]
By Young's inequality for mixed-norm Lebesgue spaces we deduce that 
\[
\|V_g(Af)\|_{L^{q}_x (L^p_\xi)}\leq \|H\|_{L^1} \|(V_g f)\circ \chi^{-1}\|_{L^{q}_x (L^{p}_\xi)}. 
\]
Hence it is sufficient to prove that 
\[
\|F\circ \chi^{-1}\|_{L^{q}_x (L^p_\xi)}\leq B^{-(\frac{1}{p}-\frac{1}{q})}\|F\|_{L^{p}_y (L^{q}_\eta)} 
\]
for (continuous) functions $F(y,\eta)$ on $\rdd$. This is clear if $p=q$. By interpolation it is enough to prove the case $1\leq p<\infty$, $q=\infty$, namely
\begin{equation}\label{eq da pro}
\sup_{x\in\rd}\int_{\rd}|F(y(x,\xi),\eta(x,\xi))|^p\, d\xi\leq B^{-1}\int_{\rd}
\, \sup_{\eta\in\rd}|F(y,\eta)|^p\,dy.
\end{equation}
Now, we have 
\[
\int_{\rd}|F(y(x,\xi),\eta(x,\xi))|^p\, d\xi\leq \int_{\rd}\sup_{\eta\in\rd}|F(y(x,\xi),\eta)|^p\, d\xi.
\]
Moreover, by Hadamard's global inverse function theorem (see, e.g., \cite[Theorem 6.2.4]{krantz}), we can perform the chance of variable $y=y(x,\xi)$ in the right-hand side, regarding $x$ as a parameter. Indeed, since the inverse of a symplectic matrix $M\in Sp(d,\R)$ is given by $J^{-1} M^t J$, we see that 
\[
\frac{\partial y}{\partial \xi}(x,\xi)= - \Big(\frac{\partial x}{\partial \eta}\Big)^t(y,\eta).
\]
Hence
\[
\big|{\rm det}\frac{\partial y}{\partial \xi}(x,\xi)\big|\geq B>0,
\]
and we deduce
\[
\int_{\rd}\sup_{\eta\in\rd}|F(y(x,\xi),\eta)|^p\, d\xi\leq B^{-1} \int_{\rd}\sup_{\eta\in\rd}|F(y,\eta)|^p\, dy,
\]
which concludes the proof of \eqref{eq da pro}. 
\end{proof}

\section{Alomost diagonalizations of Scr\"odinger propagators} 
This section is devoted to almost diagonalization results for the Schr\"odinger equations considered in Sections \ref{sec smooth} and \ref{sec nonsmooth}.

We recall the following result from \cite{tataru} (where a Gaussian window was considered, but the same result holds for every $g\in\cS(\R^d)\setminus\{0\})$ as a consequence of the results in \cite{CNGR_JMPA}). 
\begin{theorem}\label{tataru} 
    Let $a(x,\xi)$ be a Hamiltonian that satisfies assumptions (i) and (ii) of Section \ref{sec smooth}, let $\chi_{t,s}$ be the corresponding Hamiltonian flow and let $U(t,s)$ be the corresponding Schr\"odinger propagator. Let $g\in\cS(\R^d)\setminus\{0\}$. Then for every $T>0$ and $N\in\bN$ there exists $C_N>0$ such that \[
|\langle U(t,s)\pi(z) g,\pi(w)g\rangle |
    \leq C_N (1+|w-\chi_{t,s}(z)|)^{-N}\qquad z,w\in\rdd,\ t,s\in[-T,T].
\]
\end{theorem}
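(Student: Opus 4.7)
The plan is to track the short-time Fourier transform of the solution and show that it propagates along $\chi_{t,s}$ modulo errors with rapid phase-space decay. Fix $z\in\rdd$ and set $u(t):=U(t,s)\pi(z)g$, $F(t,w):=V_g u(t)(w)=\langle u(t),\pi(w)g\rangle$. Combining the inversion formula $u(t)=\int V_g u(t)(z')\pi(z')g\,dz'$ with the Schr\"odinger equation gives the integral evolution
\[
i\partial_t F(t,w)=\int_{\rdd}\langle a^w(t)\pi(z')g,\pi(w)g\rangle\,F(t,z')\,dz'.
\]

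The core step is to split the kernel into a transport part along the Hamiltonian vector field $J\nabla_w a(t,w)$ and a remainder $R(t,w,z')$ with rapid decay in $|w-z'|$, uniformly in $t\in[-T,T]$. One expands $a(t,\cdot)$ as its second-order Taylor polynomial around the midpoint of $w$ and $z'$; by hypothesis (ii), the higher-order Taylor remainder has uniformly bounded derivatives of every order, so its Weyl operator lies in $S^0_{0,0}$ uniformly in $t$. By the almost-diagonalization characterization (b) of $S^0_{0,0}$ recalled in Section \ref{sec notation}, the corresponding matrix coefficients satisfy $|R(t,w,z')|\leq C_N(1+|w-z'|)^{-N}$ for every $N$. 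The remaining (at most quadratic) part of the symbol generates, at the operator level, a first-order transport on phase space whose characteristics are exactly the trajectories of $\chi_{t,s}$.

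With this structure in place, I would introduce the time-dependent weight $\rho_N(t,w):=(1+|w-\chi_{t,s}(z)|)^N$ and perform a weighted $L^\infty$ energy estimate on $G(t,w):=\rho_N(t,w)F(t,w)$. Because the transport part's characteristics coincide with $\chi_{t,s}$, $\rho_N$ is preserved along them; the remainder acts boundedly on $L^\infty(\rdd)$ by Schur's test, using the Peetre-type inequality $\rho_N(t,w)\leq C(1+|w-z'|)^N\rho_N(t,z')$ together with the rapid decay of $R$. Gronwall's inequality on $[-T,T]$ yields $\|G(t,\cdot)\|_{L^\infty}\leq C_{N,T}\|G(s,\cdot)\|_{L^\infty}$, and at $t=s$ we have $F(s,\cdot)=V_g(\pi(z)g)(\cdot)$, which is a Schwartz function centered at $z=\chi_{s,s}(z)$; hence $\|G(s,\cdot)\|_{L^\infty}<\infty$ for every $N$, and the desired bound follows.

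The main obstacle is organising the symbol decomposition so that the potential unboundedness of $a(t,\cdot)$ (which may grow quadratically) is absorbed entirely into the transport part, while the remainder stays in $S^0_{0,0}$ uniformly in $t$. This is why I would Taylor-expand around the midpoint of $w$ and $z'$: that is precisely the point where $\pi(z')g$ and $\pi(w)g$ overlap effectively, so the unboundedness of $a$ off the diagonal is suppressed by the intrinsic decay of $\langle \pi(z')g,\pi(w)g\rangle$. A secondary issue is the possibly exponential growth of $\chi_{t,s}$ in $z$ on $[-T,T]$, but since all constants are allowed to depend on $T$ this is harmless for Gronwall's bound.
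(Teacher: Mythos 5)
The paper offers no proof of this statement: it is recalled verbatim from \cite{tataru}, with the passage from a Gaussian window to a general $g\in\cS(\rd)\setminus\{0\}$ delegated to \cite{CNGR_JMPA}. Your overall strategy --- conjugate the equation to phase space via $V_g$, split the resulting kernel into a transport part along the Hamiltonian flow plus a rapidly decaying remainder, and close with a weighted Gronwall estimate --- is indeed the strategy of the cited proof. But as written the argument has genuine gaps at its two load-bearing points. First, the claim that the second-order Taylor remainder $r(t,\cdot\,;\zeta_0)=a(t,\cdot)-P_2(t,\cdot\,;\zeta_0)$ (with $\zeta_0=\frac{w+z'}{2}$) ``has uniformly bounded derivatives of every order, so its Weyl operator lies in $S^0_{0,0}$'' is false: under hypothesis (ii) only the derivatives of order $\geq 2$ of $r$ are bounded, while $r$ itself grows like $|\zeta-\zeta_0|^2$ and $\nabla r$ like $|\zeta-\zeta_0|$ (take $a(\zeta)=(1+|\zeta|^2)^{1/2}$, $\zeta_0=0$). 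The rapid decay $|R(t,w,z')|\leq C_N(1+|w-z'|)^{-N}$ is still true, but for a different reason: $r$ vanishes to second order exactly at the center $\zeta_0$ of the cross-Wigner distribution $W(\pi(w)g,\pi(z')g)$, whose Schwartz decay absorbs the polynomial growth of $r$, and the decay in $|w-z'|$ then comes from the oscillation of that Wigner distribution at frequency $\sim J(w-z')$. This step is repairable, but the justification you give is not the right one.

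Second, and more seriously, the assertion that the remaining at most quadratic part ``generates, at the operator level, a first-order transport on phase space whose characteristics are exactly the trajectories of $\chi_{t,s}$'' is precisely the content of the theorem and is left unproven. Your Taylor polynomial has base point $\zeta_0=\frac{w+z'}{2}$, so it is not the symbol of a single operator; converting $\int\langle P_2^w\pi(z')g,\pi(w)g\rangle F(t,z')\,dz'$ into $J\nabla_w a(t,w)\cdot\nabla_w F(t,w)+i(\text{real})F(t,w)$ plus an admissible error requires an explicit computation (expressing $x_j\pi(z')g$ and $D_j\pi(z')g$ through $\partial_{z'}\pi(z')g$ and integrating by parts in $z'$), and the transport structure comes out clean only for a Gaussian window --- which is exactly why the paper must invoke \cite{CNGR_JMPA} to reach general $g$. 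You would also need to check that the weight $\rho_N$ is only approximately, not exactly, preserved by the transport (its material derivative is $O(\rho_N)$ because $\nabla^2 a$ is bounded), which is fine for Gronwall but should be said. Finally, the Gronwall step presupposes $\|G(t,\cdot)\|_{L^\infty}<\infty$ for all $t$, i.e., that $U(t,s)$ preserves rapid decay of $V_g u$ --- essentially the statement being proved --- so an a priori bootstrap (e.g., induction on $N$, or approximation of $a$) is needed to avoid circularity. In short: right roadmap, but the decisive computations and the window-independence are missing.
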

We now focus on the class of nonsmooth Hamiltonians considered in Section \ref{sec nonsmooth}. First, we need the following result, which follows from the proof of \cite[Theorem 3.4]{CNR_JMP}.
\begin{proposition}\label{pro composiz}
    Let $g\in\cS(\R^d)$, with $\|g\|_{L^2}=(2\pi)^{-d/2}$. Let $A_j:\cS'(\R^d)\to\cS'(\R^d)$, $j=1,2$, be linear continuous operators satisfying the estimates 
    \[
    |\langle A_j \pi(z)g,\pi(w)g\rangle|\leq H_j (w-\cA_j z),\qquad z,w\in\R^{2d}
    \]
    for some $H_j\in L^1(\R^{2d})$, $\cA_j\in Sp(d,\R)$, $j=1,2$. 
    
    Then 
    \[
    |\langle A_1A_2 \pi(z)g,\pi(w)g\rangle|\leq H (w-\cA_1\cA_2 z),\qquad z,w\in\R^{2d},
    \]
  where $H\in L^1(\R^{2d})$ is given\footnote{
Here the matrices $\cA_j$ are identified with the corresponding linear maps, $\circ$ stands for the composition of functions, and $\ast$ for the convolution.
  }
  by $H=((H_1\circ\cA_1)\ast H_2)\circ \cA_1^{-1}$. In particular, $\|H\|_{L^1}\leq \|H_1\|_{L^1}\|H_2\|_{L^1}$. 
\end{proposition}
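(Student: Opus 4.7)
The strategy is a direct computation based on the reproducing formula $f = V_g^\ast V_g f$, which for the normalized window $g$ with $\|g\|_{L^2} = (2\pi)^{-d/2}$ gives the identity
\[
A_2 \pi(z) g = \int_{\R^{2d}} \langle A_2 \pi(z) g, \pi(u) g\rangle \, \pi(u) g \, du.
\]
Applying $A_1$ and taking the inner product with $\pi(w) g$ yields the standard composition formula
\[
\langle A_1 A_2 \pi(z) g, \pi(w) g\rangle = \int_{\R^{2d}} \langle A_2 \pi(z) g, \pi(u) g\rangle \, \langle A_1 \pi(u) g, \pi(w) g\rangle \, du,
\]
so that the two pointwise bounds can be inserted to obtain
\[
|\langle A_1 A_2 \pi(z) g, \pi(w) g\rangle| \leq \int_{\R^{2d}} H_2(u - \cA_2 z)\, H_1(w - \cA_1 u)\, du.
\]

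Next I would perform the translation $u \mapsto u + \cA_2 z$ in the integral, which absorbs $\cA_2 z$ into the argument of $H_1$ and produces, writing $v := w - \cA_1 \cA_2 z$,
\[
|\langle A_1 A_2 \pi(z) g, \pi(w) g\rangle| \leq \int_{\R^{2d}} H_2(u)\, H_1(v - \cA_1 u)\, du.
\]
To identify the right-hand side with the proposed kernel $H(v) = ((H_1 \circ \cA_1) \ast H_2)(\cA_1^{-1} v)$, I would simply unfold this convolution: substituting the definition gives
\[
H(v) = \int (H_1 \circ \cA_1)(\cA_1^{-1} v - u) H_2(u)\, du = \int H_1(v - \cA_1 u)\, H_2(u)\, du,
\]
which matches the bound above exactly and delivers the claimed almost diagonalization with composed symplectic map $\cA_1 \cA_2$.

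For the $L^1$-estimate on $H$, the crucial point is that $\cA_1 \in Sp(d,\R)$ has determinant one, so both the pre-composition with $\cA_1$ and the post-composition with $\cA_1^{-1}$ are isometries of $L^1(\R^{2d})$. Combined with Young's inequality $\|f \ast g\|_{L^1} \leq \|f\|_{L^1}\|g\|_{L^1}$, this gives
\[
\|H\|_{L^1} = \|(H_1 \circ \cA_1) \ast H_2\|_{L^1} \leq \|H_1 \circ \cA_1\|_{L^1} \|H_2\|_{L^1} = \|H_1\|_{L^1} \|H_2\|_{L^1}.
\]

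There is essentially no obstacle here beyond careful bookkeeping of the affine shifts and of the symplectic change of variables; the whole argument is a one-page chain of substitutions once the reproducing formula for $V_g$ is written down. The only place where care is required is ensuring the normalization of $g$ is consistent with the inversion formula so that no spurious constant appears, and tracking that $\cA_1$ being symplectic (hence unimodular) is exactly what allows the translation invariance needed both for identifying $H$ and for the $L^1$ bound.
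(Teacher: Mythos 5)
Your proof is correct and is exactly the standard argument behind this statement (the paper itself only cites \cite{CNR_JMP} for it): the reproducing formula $f=V_g^\ast V_g f$ gives the composition identity for the Gabor matrix, and the rest is the change of variables $u\mapsto u+\cA_2 z$ together with unimodularity of $\cA_1$ for the $L^1$ bound. All substitutions and the identification of $H$ check out, so there is nothing to add.
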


\begin{theorem}\label{pro 8mag} 
Consider a Hamiltonian $a(x,\xi)=a_2(x,\xi)+a_1(x,\xi)+a_0(x,\xi)$, with $a_0,a_1,a_2$ satisfying assumptions (iii), (iv), and (v) of Section \ref{sec nonsmooth}. Let $\R\ni t\mapsto S_t$ be the Hamiltonian flow of $a_2$ and let $U(t)=e^{-it a^w}$ be the corresponding Schr\"odinger propagator, as in Section \ref{sec nonsmooth}. Let $g\in\cS(\R^d)\setminus\{0\}$. Then
\begin{equation}\label{eq al}
|\langle U(t)\pi(z) g,\pi(w)g\rangle |
    \leq H_t(w-S_t z)\qquad z,w\in\rdd,\quad t\in\R
\end{equation}
for some time-dependent function $H_t\in L^1(\R^{2d})$, with $\|H_t\|_{L^1}$ locally bounded as a function of $t\in\R$. 
\end{theorem}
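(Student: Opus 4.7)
The plan is to reduce to the smooth case via the interaction picture. First I would set $U_1(t) := e^{-it(a_2+a_1)^w}$ and observe that $a_2+a_1$ is time-independent, real, and has derivatives of order $\geq 2$ bounded (the Hessian of $a_2$ is constant, and assumption (iv) bounds all derivatives of $a_1$ of order $\geq 1$), so it satisfies assumptions (i), (ii) of Section \ref{sec smooth}. Theorem \ref{tataru} then gives, for every $N$ and every $T>0$, $|\langle U_1(t)\pi(z)g,\pi(w)g\rangle| \leq C_N(1+|w-\chi_t^{(1)}(z)|)^{-N}$ for $|t|\leq T$, with $\chi_t^{(1)}$ the Hamiltonian flow of $a_2+a_1$. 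A variation-of-constants computation $\chi_t^{(1)}(z) = S_t z + \int_0^t S_{t-s} J\nabla a_1(\chi_s^{(1)}(z))\,ds$, combined with the boundedness of $\nabla a_1$, would yield the uniform bound $|\chi_t^{(1)}(z) - S_t z| \leq M(T)$ for $|t|\leq T$, $z\in\rdd$. Hence the estimate can be rewritten with $S_t z$ in place of $\chi_t^{(1)}(z)$, at the price of a worse constant; the same holds for $U_1(t)^{-1} = U_1(-t)$ with shift $S_t^{-1}$.

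Next, I would set $V(t) := U_1(t)^{-1} U(t)$, so that $U(t) = U_1(t) V(t)$ and $V$ satisfies the interaction-picture ODE $\partial_t V = -i\, c(t)^w V$, $V(0) = I$, where $c(t)^w := U_1(t)^{-1} a_0^w U_1(t)$. The crucial step is to show that $c(t) \in M^{\infty,1}(\rdd)$ with $\|c(t)\|_{M^{\infty,1}}$ locally bounded in $t$. Since $a_0 \in M^{\infty,1}$, characterization (a) provides a bound $|\langle a_0^w \pi(z)g,\pi(w)g\rangle| \leq H_0(w-z)$ with $H_0 \in L^1$, i.e.\ almost diagonalization with symplectic shift $I$. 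Two applications of Proposition \ref{pro composiz} --- first to $a_0^w \cdot U_1(t)$ (with shifts $I$ and $S_t$, giving composition shift $S_t$), then to $U_1(t)^{-1}$ composed with this product (shifts $S_t^{-1}$ and $S_t$, giving composition shift $I$) --- would produce an $L^1$ Gabor-matrix bound for $c(t)^w$ in $w-z$, of $L^1$-norm at most the product of the three input $L^1$-norms. The converse direction of characterization (a) then gives $c(t) \in M^{\infty,1}$ with the required local control in $t$.

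To finish, I would invoke the Banach algebra property (c). The Dyson--Picard series $V(t) = I + \sum_{n\geq 1} (-i)^n \int_{0\leq s_1\leq \dots\leq s_n\leq t} c^w(s_n)\cdots c^w(s_1)\,ds_1\cdots ds_n$ converges absolutely in the $M^{\infty,1}$-Banach algebra on $[-T,T]$, so the Weyl symbol of $V(t)$ lies in $M^{\infty,1}$ with locally bounded norm; another application of (a) yields an $L^1$ Gabor-matrix bound for $V(t)$ with shift $I$. A final application of Proposition \ref{pro composiz} to $U(t) = U_1(t) V(t)$, with symplectic shifts $S_t$ and $I$, would then deliver \eqref{eq al}. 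The main obstacle is the $M^{\infty,1}$ estimate for the conjugated symbol $c(t)$: it rests squarely on the symplectic covariance built into Proposition \ref{pro composiz} and on the two-sided nature of characterization (a). A secondary subtlety --- the mismatch between the nonlinear flow $\chi_t^{(1)}$ of $a_2+a_1$ and the linear symplectic flow $S_t$ appearing in the statement --- is dispatched by the boundedness of $\nabla a_1$.
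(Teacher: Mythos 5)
Your proposal is correct and follows essentially the same route as the paper: reduce to the smooth propagator $U_1(t)$ via Theorem \ref{tataru}, compare the flows of $a_2+a_1$ and $a_2$ using the boundedness of $\nabla a_1$ (you use Duhamel where the paper uses Gronwall --- both work), show the conjugated perturbation lies in $M^{\infty,1}$ via Proposition \ref{pro composiz} and the two-sided characterization (a), sum the Dyson series in the $M^{\infty,1}$ Banach algebra, and compose once more. The only discrepancies are immaterial: your interaction-picture generator $U_1(-t)a_0^wU_1(t)$ is the one matching the factorization $U=U_1V$, and the paper additionally spells out a measurability point needed to bring the $M^{\infty,1}$ norm inside the iterated time integrals.
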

\begin{proof}
This result was already proved in \cite[Theorem 4.1]{CNR_JMP} in the case $a_1=0$, hence $a=a_2+a_0$, regarding the nonsmooth term $a_0\in M^{\infty,1}(\R^{2d})$ as a perturbation. Later, that proof was slightly simplified in \cite[Theorem 6.4.1]{nicola_book}. There we worked in the so-called interaction picture and therefore we studied the evolution of the operator
\[
e^{it a_2^w} a_0^w e^{-ita_2^w},
\]
a key point being that this is still a pseudodifferential operator with symbol in $M^{\infty,1}(\R^{2d})$, as a consequence of the symplectic covariance of the Weyl quantization. This is no longer a priori guaranteed if $a_2$ is replaced by $a_2+a_1$ in the above formula (in fact, this is still the case, as a consequence of the result we are proving, but we cannot use this fact yet). Consequently, we will use an approach similar to that in \cite{CNR_JMP,nicola_book}, but several modifications will be necessary at certain crucial points.

Let 
\[
U_1(t)=e^{-it(a_2+a_1)^w}
\]
be the propagator associated with the Hamiltonian $a_2+a_1$, and let $\sigma_t\in\cS'(\R^{2d})$, $t\in\R$, be defined by 
\[
\sigma_t^w:=U_1(t) a_0^w U_1(-t).
\]
Since $a_0\in M^{\infty,1}(\R^{2d})$, $\sigma_0^w$ is a bounded operator in $L^2(\R^d)$ and $\sigma_t^w$ is a strongly continuous family of bounded operators in $L^2(\R^d)$. 

We will prove, in order, the following facts. 
\begin{itemize}
        \item[(a)] For every $T>0$ and $N\in\bN$ there exists $C_N>0$ such that 
    \begin{equation}\label{eq 5ma}
|\langle U_1(t)\pi(z) g,\pi(w)g\rangle |
    \leq C_N (1+|w-S_t z|)^{-N}\qquad z,w\in\rdd,\ t\in[-T,T].
      \end{equation}
      \item[(b)]  We have 
      \[
|\langle \sigma^w_t \pi(z) g,\pi(w)g\rangle |
    \leq H_t (w-z)\qquad z,w\in\rdd,\ t\in\R,
      \]
      for some function $H_t\in L^1(\R^{2d})$, with $\|H_t\|_{L^1}$ locally bounded, as a function of $t$. Equivalently, for every $t\in\R$, $\sigma_t\in M^{\infty,1}(\R^{2d})$, and for every $T>0$ there exists $C>0$ such that 
      \begin{equation}\label{eq 9ma}
\|\sigma_t\|_{M^{\infty,1}}\leq C,\qquad t\in[-T,T]. 
    \end{equation}
    
      \item[(c)] For every $t\in\R$, $k\geq1$, the iterated integral 
      \[
b_{t,k}^w:=\int_{0}^t\int_0^{t_1}\ldots\int_0^{t_{k-1}} \sigma^w_{t_1}\sigma^w_{t_2}\ldots \sigma^w_{t_k}\, dt_k\, dt_{k-1}\ldots dt_1,
      \]
which converges in the strong topology of bounded operators in $L^2(\R^d)$, defines an operator with Weyl symbol $b_{t,k}\in M^{\infty,1}(\R^{2d})$, and for every $T>0$ there exists $C>0$ such that
\begin{equation}\label{eq 9ma bis}
\|b_{t,k}\|_{M^{\infty,1}}\leq C^{k}/k!,\qquad t\in [-T,T].
\end{equation}
\end{itemize}

Let us now prove (a). 
It follows from the almost diagonalization result in Theorem \ref{tataru}, applied to the Hamiltonian $a_2(x,\xi)+a_1(x,\xi)$ that, for every $T>0$ and $N\in\bN$ there exists $C_N>0$ such that 
\begin{equation}\label{eq tataru}
|\langle U_1(t)\pi(z) g,\pi(w) g\rangle|\leq C_N(1+|w-\chi_t(z)|)^{-N} \qquad z,w\in\rdd,\ t\in[-T,T], 
\end{equation}
 where $\chi_t$ is the Hamiltonian flow of the Hamiltonian $a_2(x,\xi)+a_1(x,\xi)$. Hence it is enough to prove that \eqref{eq tataru} holds with $\chi_t(z)$ replaced by $S_t z$, which is the flow associated with $a_2(x,\xi)$ only. To see this, observe that
 \[
 \frac{d}{dt}\chi_t(z)=J\nabla a_2(\chi_t(z))+J\nabla a_1(\chi_t(z)) 
 \]
 and similarly 
 \[
 \frac{d}{dt} S_t z= J\nabla a_2(S_t z).
 \]
 Hence, since $\chi_0(z)=S_0z=z$,
\[
\chi_t(z)-S_t z =\int_0^t \Big(J\nabla a_2(\chi_\tau(z))- J\nabla a_2(S_\tau z)+ J\nabla a_1(S_\tau z)\Big)\, d\tau.
\]
Now, $J\nabla a_2$ is a linear map, whereas $J\nabla a_1$ has entries in $L^\infty(\rd)$ by the assumption on $a_1$ in (iv),  Section \ref{sec nonsmooth}. Hence by  Gronwall's inequality we deduce that 
\[
|\chi_t(z)-S_t z |\leq C|t|e^{C|t|}
\]
for some constant $C>0$ \textit{independent of} $z$. By the triangle inequality we therefore see that we can replace $\chi_t(z)$ by $S_t z$ in \eqref{eq tataru}, which concludes the proof of (a).  

Let us prove (b). 
Since $U_1(t)$ satisfies the estimate \eqref{eq 5ma}, while $a_0^w$ satisfies \eqref{eq almostdiag zero}, it follows from Proposition \ref{pro composiz} that the composition $\sigma_t^w$ enjoys the estimate in (b). The claimed equivalence in terms of the Weyl symbol $\sigma_t$, namely \eqref{eq 9ma}, is a consequence of item (c) in Section \ref{sec pseudo}. 

Now, prove (c). To verify $b_{t,k}\in M^{\infty,1}(\R^d)$, and to estimate its norm, we can use the equivalent norm in \eqref{eq equiv norm}. 
We observe that, since the operators $\sigma_t^w$ and the phase space shifts $\pi(z)$ are strongly continuous in $L^2(\R^d)$, the supremum
\[
\sup_{w\in\R^{2d}}| \langle \sigma^w_{t_1}\sigma^w_{t_2}\ldots \sigma^w_{t_k}\pi(z+w)g,\pi(w)g\rangle|
\]
is a lower semicontinous function of $t_1,\ldots t_k,z$, hence Borel measurable. As a consequence, we can apply Fubini's theorem to exchange the integrals with respect to the $t$'s and with respect to $z$, and we deduce that
\[
|||b_{t,k}|||\leq \big|\int_{0}^t\int_0^{t_1}\ldots\int_0^{t_{k-1}} |||\sigma^w_{t_1}\sigma^w_{t_2}\ldots \sigma^w_{t_k}|||\,\, dt_k\, dt_{k-1}\ldots dt_1\big|
\]
The desired estimate \eqref{eq 9ma bis} then follows from the Banach algebra property and \eqref{eq 9ma} (see item (c) in Section \ref{sec pseudo}). 

Now, regarding $a_0^w$ as a perturbation of $(a_0+a_1)^w$, bounded in $L^2(\R^d)$, by a standard argument from operator theory (cf., e.g., \cite[Theorem 6.4.1]{nicola_book}) we can write $U(t)=e^{-ita^w}$ as
\begin{equation}\label{eq ut}
U(t)=U_1(t) c_t^w,
\end{equation}
where $c_t^w$ is given by the so-called Dyson series 
\begin{align*}
c_t^w&=I+\sum_{k=1}^\infty (-i)^k \int_{0}^t\int_0^{t_1}\ldots\int_0^{t_{k-1}} \sigma^w_{t_1}\sigma^w_{t_2}\ldots \sigma^w_{t_k}\, dt_k\, dt_{k-1}\ldots dt_1\\
&=I+\sum_{k=1}^\infty (-i)^k b_{t,k}^w,
\end{align*}
which converges in the space $\mathcal{B}(L^2(\R^d))$ of bounded operators in $L^2(\R^d)$, endowed with the strong operator topology. However, by \eqref{eq 9ma bis}, this series also converges absolutely in the Banach algebra of pseudodifferential operators with Weyl symbol in $M^{\infty,1}(\R^{2d})$, which is continuously embedded in $\mathcal{B}(L^2(\R^d))$. Hence, we have $c_t\in M^{\infty,1}(\R^{2d})$, and $\|c_t\|_{M^{\infty,1}(\R^{2d})}\leq C(T)$ for $|t|\leq T$. This implies (see again item (c) from Section \ref{sec pseudo}), that 
 \[
|\langle c^w_t \pi(z) g,\pi(w)g\rangle |
    \leq H_t (w-z)\qquad z,w\in\rdd,\ t\in\R,
      \]
      for some function $H_t\in L^1(\R^{2d})$, with $\|H_t\|_{L^1}$ locally bounded, as a function of $t$. Combining this latter estimate with \eqref{eq 5ma}, we deduce again from Proposition \ref{pro composiz} that the operator $U(t)$, as given in \eqref{eq ut}, satisfies the estimate in \eqref{eq al} and this concludes the proof of Theorem \ref{pro 8mag}. 
\end{proof}

\section{Proof of the main results (Theorems \ref{thm thm2} and \ref{thm thm3})}
We are now able to prove our main results. 
\begin{proof}[Proof of Theorem \ref{thm thm2}] As a consequence of the almost diagonalization result in Theorem \ref{tataru} and Theorem \ref{thm thm1} we deduce the following \textit{dispersive estimate} for the propagator $U(t,s)$: for every $1\leq p\leq q\leq\infty$ there exists a constant $C_0>0$ such that  
\begin{equation}\label{eq stima1 bis}
\|U(t,s)f\|_{W^{p,q}}\leq C_0  C(t,s)^{-(\frac{1}{p}-\frac{1}{q})}\|f\|_{W^{q,p}}
\end{equation}
for all $s,t\in[-T,T]$ such that  $C(t,s)>0$. The estimate \eqref{eq stima sol} then follows from Proposition \ref{pro1.1}. 
\end{proof}  
\begin{proof}[Proof of Theorem \ref{thm thm3}]
The proof follows the same pattern as that of Theorem \ref{thm thm2}. That is, now we apply Theorem \ref{pro 8mag}, Theorem \ref{thm thm1} and Proposition \ref{pro1.1}.
\end{proof}

\section{Microlocal restriction estimates}\label{sec micro}
Let $A:\cS'(\rd)\to\cS'(\rd)$ be a continuous linear operator, satisfying the estimate 
\begin{equation}\label{eq almost diag smooth}
|\langle A\pi(z) g,\pi(w) g\rangle|\leq C_N(1+|w-\cA z|)^{-N}\qquad z,w\in\rdd
\end{equation}
for some $\cA\in Sp(d,\R)$, $g\in\cS(\rd)\setminus\{0\}$ and every $N\in \mathbb{N}$.  For such an operator, we will prove a microlocal improvement of \eqref{eq cont}. 

We begin by recalling from \cite{hormander,rodino_wahlberg} the definition of the \textit{global} (alias Gabor) wave front set $WF_G(f)\subset\rdd\setminus\{0\}$ of a temperate distribution $f\in\cS'(\rd)$, as well as its role in the problem of propagation of singularities.  

\textit{Global wave front set.} We say that $z\in \rdd\setminus\{0\}$ does not belong to $WF_G(f)\subset\rdd\setminus\{0\}$ if there exists an open conic neighborhood $\Gamma\subset\rdd\setminus\{0\}$ of $z$ such that $V_g f$ has rapid decay in $\Gamma$ for some $g\in\cS(\rd)\setminus\{0\}$, namely 
\[
\sup_{z\in \Gamma}\,(1+|z|)^N |V_g f(z)|<\infty. 
\]
Equivalently, $z\not\in WF_G(f)$ if the exists a smooth function $\psi(z)$ in $\rdd$, positively homogeneous of degree $0$ for $|z|\geq 1$ (in the sense that coincides for $|z|\geq 1$ with a function positively homogeneous of degree $0$) and such that $\psi(\lambda z)\not=0$ for $\lambda>0$ large enough, such that $\psi^w f\in\cS(\rd)$. 

It turns out that $WF_G(f)$ is a closed conic subset of $\rdd\setminus\{0\}$, and $WF_G(f)=\emptyset$ if and only if $f\in\cS(\rd)$. Hence $WF_G(f)$ encodes both the lack of regularity and decay of $f$. Despite the similarity with the usual wave front set \cite{hormander_book}, there are some striking differences; for example, the global wave front set is invariant under phase-space shifts: 
\[
WF_G(\pi(z)f)=WF_G(f)\qquad z\in\rdd.
\]

\textit{Microlocality.} Given $a\in \cS'(\rdd)$, we define its cone support ${\rm conesupp}(a)\subset\rdd\setminus\{0\}$ as the complementary in $\rdd\setminus\{0\}$ of the set of the $z$'s such that there exists an open conic neighborhood $\Gamma\subset\rdd\setminus\{0\}$ of $z$ such that, for every $N\in\mathbb{N}$ and every multi-index $\alpha$, 
\[
\sup_{z\in \Gamma}\, (1+|z|)^N |\partial^\alpha a(z)|<\infty. 
\] Therefore ${\rm conesupp}(a)$ is a closed conic subset of $\rdd\setminus\{0\}$. For example, if $\varphi(x)$ is a Schwartz function, regarded as a function in $\rdd$ (independent of the covariable $\xi$) we have 
\begin{equation}\label{eq chi}
{\rm conesupp}(\varphi)=\{0\}\times(\rd\setminus\{0\}).
\end{equation}
The reader is notified that the set ${\rm conesupp}(a)$ is called \textit{microsupport} in \cite{SW} and alternative terminology is also used in the literature (see the footnote in \cite[page 95]{SW}).

As expected, if $a\in S^0_{0,0}(\rdd)$ and $f\in \cS'(\rd)$ we have 
\begin{equation}\label{eq micro}
WF_G(a^w f)\subset WF_G(f)\cap {\rm conesupp}(a).
\end{equation}
This was proved in \cite{hormander} for $a$ in Shubin's symbol classes and in \cite{rodino_wahlberg} for $a\in S^0_{0,0}(\rdd)$, except for a slightly different definition of ${\rm conesupp}(a)$. This set is defined there (following \cite{hormander}) as the complementary in $\rdd\setminus\{0\}$ of the set of $z$' such that there exists an open conic neighborhood $\Gamma\subset\rdd\setminus\{0\}$ of $z$ such that
\[ \overline{\Gamma \cap {\rm supp}(a)}\quad \textrm{is compact}. 
\]
With our definition of ${\rm conesupp}(a)$, \eqref{eq micro} is slightly stronger (and perhaps more natural --- for example, we recapture the basic fact that $a\in\cS(\rdd)$ and $f\in\cS'(\rd)$ imply $a^w f\in\cS(\rd)$). However, \eqref{eq micro} follows immediately from the corresponding result in \cite{rodino_wahlberg}, because if $a\in S^0_{0,0}(\rdd)$ has a rapid decay in an open conic neighborhood $\Gamma\subset\rdd\setminus\{0\}$ of $z_0$, for every conic open neighborhood $\Gamma'\subset \subset \Gamma$ there exists $\tilde{a}(z)$ that vanishes for $z$ in $\Gamma'$ and $|z|\geq 1$, such that $a-\tilde{a}\in\cS(\rdd)$, so that $(a-\tilde{a})^w f\in\cS(\rd)$ if $f\in\cS'(\rd)$. 

\textit{Propagation of singularities.} If $A:\cS'(\rd)\to\cS'(\rd)$ is a linear continuous operator satisfying the almost diagonalization estimate \eqref{eq almost diag smooth} for every $N$, for $f\in\cS'(\rd)$ we have
\begin{equation}\label{eq propag}
WF_G(Af)\subset \cA(WF_G(f)),
\end{equation}
where the matrix $\cA$ is identified with the corresponding linear map. Indeed, this is an immediate consequence of the formula \eqref{eq lift}: using \eqref{eq almost diag smooth} we obtain
\[
|V_g (Af)(\cA w)|\leq C_N \int_{\rdd}\langle 1+|w-z|)^{-N}|V_g f(z)|\, dz,
\]
from which it is easy to show that if $V_g f(z)$ has a rapid decay in an open conic neighborhood $\Gamma$, and if $\Gamma'$ is an open conic subset of $\R^{2d}\setminus\{0\}$, with $\Gamma'\subset \subset \Gamma$, then $V_g (Af)(\cA w)$ has a rapid decay for $w\in \Gamma'$ (see \cite{rodino_wahlberg}).

We can then state the promised microlocal improvement of \eqref{eq cont}.
\begin{theorem}\label{thm micro}
    Let $A:\cS'(\rd)\to\cS'(\rd)$ be a continuous linear operator satisfying the estimates \eqref{eq almost diag smooth} for every $N\in\bN$ and some $g\in\cS(\rd)\setminus\{0\}$, where $\cA=\begin{pmatrix}
    A& B\\
    C& D
\end{pmatrix}\in{\rm Sp}(d,\R)$ is a symplectic matrix with $\det B\not=0$. Let $\psi(z)$, with $z\in\R^{2d}$, be a smooth function, positively homogeneous of degree $0$ for $|z|\geq 1$, with $\psi(z)=1$ for $z$ in an open conic neighborhood of  $\cA^{-1}(\{0\}\times(\rd\setminus\{0\}))$ and $|z|\geq 1$.

If $\varphi\in\cS(\rd)$, $1\leq p\leq 2$, there exists $C>0$ and, for every $N\in\bN$, there exists $C_N>0$ such that
\[
\|\cF (\varphi A f)\|_{L^p}\leq C\|\psi^w f\|_{L^p}+C_N\|\langle x\rangle^{-N} f\|_{H^{-N}}\qquad f\in\cS(\R^d),
\]
where $\langle x\rangle:=(1+|x|^2)^{1/2}$ and $\|\cdot\|_{H^{-N}}$ denotes the ($L^2$-based) Sobolev norm of order $-N$ in $\rd$. 
\end{theorem}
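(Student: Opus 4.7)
The plan is to split $f$ according to the ``relevant'' and ``irrelevant'' portions of its phase-space content with respect to the cone $V := \cA^{-1}(\{0\} \times (\rd \setminus \{0\}))$, writing
\[
\cF(\varphi A f) = \cF(\varphi A \psi^w f) + \cF\bigl(\varphi A (I - \psi^w) f\bigr),
\]
and handling each summand separately. For the first summand, the invertibility of the upper-right block $B$ of $\cA$ gives $|\det(\partial x/\partial \eta)| = |\det B| > 0$, so Theorem \ref{thm thm1} applied to the almost diagonalization \eqref{eq almost diag smooth} yields that $A : W^{p',p}(\rd) \to W^{p,p'}(\rd)$ boundedly. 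The transference principle in Proposition \ref{pro1.1} therefore produces $\|\cF(\varphi A \psi^w f)\|_{L^p} \leq C \|\psi^w f\|_{L^p}$, which is the first contribution to the right-hand side.

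The crux of the argument is to show that the operator $R := \varphi A (I - \psi^w)$ is \emph{globally regularizing}, in the sense that its Gabor matrix elements satisfy
\[
|\langle R \pi(z) g, \pi(w) g\rangle| \leq C_N \langle z\rangle^{-N} \langle w\rangle^{-N}, \qquad z, w \in \rdd,
\]
for every $N \in \bN$. Iterating the Gabor composition identity (cf.\ Proposition \ref{pro composiz}) expresses this matrix element as a double integral over an intermediate pair $(u, v)$, in which three factors appear: (i) $|\langle \varphi \pi(u) g, \pi(w) g\rangle| \leq C_N \langle u - w\rangle^{-N} \langle w_1\rangle^{-N}$ (writing $w = (w_1, w_2) \in \rd \times \rd$), coming from the Schwartz nature of $\varphi$ as a phase-space symbol with $\mathrm{conesupp}$ in $\{0\} \times (\rd \setminus \{0\})$; (ii) $|\langle A \pi(v) g, \pi(u) g\rangle| \leq C_N \langle u - \cA v\rangle^{-N}$, by \eqref{eq almost diag smooth}; and (iii) $|\langle (I - \psi^w) \pi(z) g, \pi(v) g\rangle| \leq C_N \langle v - z\rangle^{-N}$, with an \emph{extra} decay in $|z|$ when $z$ (hence $v$) lies in a smaller conic neighborhood of $V$, exploiting that $1 - \psi$ vanishes on the open cone $V_\epsilon$ where $\psi \equiv 1$. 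A case analysis depending on whether $|w_1| \gtrsim |w|$ or $|w_2| \gg |w_1|$ then yields the bound: in the first regime, (i) supplies $\langle w\rangle^{-N}$ directly, and the off-diagonal chain $w \leftrightarrow u$, $u \leftrightarrow \cA v$, $v \leftrightarrow z$ propagates it to $\langle z\rangle^{-N}$; in the second regime, $w$ lies close to $\{0\} \times (\rd \setminus \{0\})$, the off-diagonal decays force $v$ close to $\cA^{-1}(\{0\} \times \rd) \subset V_\epsilon$, and (iii) supplies the missing $\langle v\rangle^{-N}$ decay which then transports to $\langle z\rangle^{-N}$ and $\langle w\rangle^{-N}$.

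Once the Gabor kernel estimate is in hand, the conclusion is routine: the identity $V_g(Rf)(w) = \int \langle R \pi(z) g, \pi(w) g\rangle V_g f(z)\, dz$ and Cauchy--Schwarz give $|V_g(Rf)(w)| \leq C_N \langle w\rangle^{-N} \|\langle \cdot\rangle^{-N'} V_g f\|_{L^2}$ for suitable $N' < N$, and the weighted STFT norm on the right is equivalent to $\|\langle x\rangle^{-N''} f\|_{H^{-N''}}$ through the standard identification of weighted Shubin--Sobolev spaces with weighted modulation spaces. Consequently $Rf \in \cS(\rd)$ with $\|\cF(Rf)\|_{L^p} \leq C_N \|\langle x\rangle^{-N} f\|_{H^{-N}}$. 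The principal obstacle is the quantitative microlocal step (iii): the standard almost diagonalization for $S^0_{0,0}$ only provides off-diagonal decay, and converting the vanishing of $1 - \psi$ on $V_\epsilon$ into an additional $\langle v\rangle^{-N}$ factor in the Gabor kernel requires a Wigner-type bookkeeping of the cross-symbol, sharper than what \eqref{eq micro} yields directly.
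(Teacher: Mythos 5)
Your treatment of the main term $\cF(\varphi A\,\psi^w f)$ coincides with the paper's: since $\cA$ is linear, $\partial x/\partial\eta=B$, so Theorem \ref{thm thm1} gives $A:W^{p',p}\to W^{p,p'}$ with norm $\lesssim|\det B|^{-(1/p-1/p')}$, and Proposition \ref{pro1.1} yields $\|\cF(\varphi A\psi^w f)\|_{L^p}\leq C\|\psi^w f\|_{L^p}$. For the remainder $R=\varphi A(1-\psi)^w$ you take a genuinely different route. The paper argues \emph{qualitatively}: by \eqref{eq chi}, \eqref{eq micro} and \eqref{eq propag}, $WF_G(Rf)\subset V\cap\cA\left({\rm conesupp}(1-\psi)\right)=\emptyset$ with $V=\{0\}\times(\rd\setminus\{0\})$, hence $Rf\in\cS(\rd)$ for every $f\in\cS'(\rd)$; the quantitative bound $\|\cF(Rf)\|_{L^p}\leq C_N\|\langle x\rangle^{-N}f\|_{H^{-N}}$ is then obtained \emph{for free} by the closed graph theorem applied to $\cF R:X_N\to L^p$, where $X_N$ is the Banach space normed by $\|\langle x\rangle^{-N}\cdot\|_{H^{-N}}$. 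This soft step is exactly what allows the paper to bypass the ``Wigner-type bookkeeping'' that you correctly identify as the principal obstacle; the price is that the constants $C_N$ are not effective.

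Your alternative --- a direct proof that the Gabor matrix of $R$ decays like $\langle z\rangle^{-N}\langle w\rangle^{-N}$ --- is viable and would give a self-contained argument with explicit constants, but as written it has a genuine gap: the crucial estimate (iii), i.e.\ the extra conic decay $\langle v\rangle^{-N}$ of $\langle(1-\psi)^w\pi(z)g,\pi(v)g\rangle$ when $v$ lies in a smaller cone inside $\{\psi=1\}$, is asserted, not proved (you also state it with the extra decay in $z$ but use it with the extra decay in $v$; reconciling the two requires splitting according to whether $|v-z|\lesssim|v|$). The estimate is true and can be obtained from the identity $|\langle b^w\pi(z)g,\pi(w)g\rangle|=|V_\Phi b(\tfrac{z+w}{2},J(w-z))|$ with $\Phi=W(g,g)\in\cS(\rdd)$ the Wigner transform of $g$: since $1-\psi$ vanishes on $V_\epsilon\cap\{|z|\geq1\}$ and the distance from a smaller cone to the complement of $V_\epsilon$ grows linearly, $V_\Phi(1-\psi)(u,\zeta)$ decays rapidly in $u$ on the smaller cone, uniformly and rapidly in $\zeta$. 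With this lemma in place, your two-regime case analysis ($|w_1|\gtrsim|w|$ versus $|w_2|\gg|w_1|$) and the final identification of $\|\langle\cdot\rangle^{-N}V_gf\|_{L^2}$ with a Shubin--Sobolev norm equivalent to $\|\langle x\rangle^{-N'}f\|_{H^{-N'}}$ are routine, so the proposal is completable --- but the unproved quantitative conic lemma is precisely the hard core of your approach, and it should not be left as a remark.
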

\begin{proof}
We write $f=\psi^w f+(1-\psi)^w f$. The term $\cF \varphi A \psi^w f$ can be estimated using the assumption \eqref{eq almost diag smooth}, Theorem \ref{thm thm1} with $q=p'$, and \eqref{eq cont}. Hence, it is sufficient to prove that for every $N>0$,
\begin{equation}\label{eq daver 0}
    \|\cF (\varphi A (1-\psi)^w f)\|_{L^p}\leq C_N\|\langle x\rangle^{-N}f \|_{H^{-N}}
\end{equation}
for $f\in\cS(\rd)$. In fact, we will prove in a moment that
\begin{equation}\label{eq daver 1}
f\in\cS'(\rd)\ \Rightarrow\  \varphi A (1-\psi)^w f\in\cS(\rd).
\end{equation}
This shows that $\cF \varphi A (1-\psi)^w$, as a continuous linear operator $\cS'(\rd)\to \cS'(\rd)$, in fact maps $\cS'(\rd)$ into $ \cS(\rd)$. Consider, for every $N>0$, the Banach space $X_N$ of temperate distributions $f\in\cS'(\rd)$ such that $\|\langle x\rangle^{-N}f\|_{H^{-N}}<\infty$, with the obvious norm. By the closed graph theorem,  for every $N>0$, $\cF \varphi A (1-\psi)^w:X_N\to L^p(\rd)$ continuously, which gives \eqref{eq daver 0}.

It remains to prove \eqref{eq daver 1}. Let $V=\{0\}\times(\rd\setminus\{0\})$. By assumption we have 
\[
{\rm conesupp}(1-\psi)\cap \cA^{-1}(V)=\emptyset. 
\]
On the other hand, using \eqref{eq chi}, \eqref{eq micro} and \eqref{eq propag} (where $\varphi$ is regarded as a symbol in $S^0_{0,0}(\rdd)$ independent of the covariable $\xi$), we have 
\[
WF_G(\varphi A (1-\psi)^w f)\subset V\cap \cA({\rm conesupp}(1-\psi))=\emptyset.
\]
Hence, $\varphi A (1-\psi)^w f\in\cS(\R^d)$, which concludes the proof of \eqref{eq daver 1}. 
\end{proof}
\begin{corollary} Let $\nu$ be a positive Radon measure in $\rd$ with compact support, and let $1\leq p\leq 2$, $1\leq q\leq \infty$ such that $\cF:L^p(\R^d)\to L^q(\R^d,\nu)$ continuously. Under the same notation and assumptions as in Theorem \ref{thm micro}, there exists $C>0$ and, for every $N\in \bN$, there exists $C_N>0$ such that
\begin{equation}\label{eq 9ma due}
\|
A f\|_{L^q(\rd,\nu)}\leq C\|\psi^w f\|_{L^p(\rd)}+C_N\|\langle x\rangle^{-N} f\|_{H^{-N}(\rd)},\qquad f\in\cS(\R^d).
\end{equation}
\end{corollary}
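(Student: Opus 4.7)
The plan is to deduce this corollary from Theorem \ref{thm micro} by the same transference argument already used to establish Proposition \ref{pro fond}. Since $\nu$ has compact support, I would first pick $\varphi\in C^\infty_c(\R^d)\subset\cS(\R^d)$ with $\varphi\equiv 1$ on $\operatorname{supp}(\nu)$, so that for every $f\in\cS(\R^d)$
\[
\|Af\|_{L^q(\R^d,\nu)}=\|\varphi Af\|_{L^q(\R^d,\nu)}.
\]

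Next, I would apply Fourier inversion to the compactly supported function $\varphi Af$, writing $\varphi Af=\cF^{-1}\cF(\varphi Af)$. The reflection $g(\xi)\mapsto g(-\xi)$ is an isometry of $L^p(\R^d)$ and intertwines $\cF$ with $(2\pi)^d\cF^{-1}$, so the restriction hypothesis $\cF:L^p(\R^d)\to L^q(\R^d,\nu)$ is equivalent to the boundedness $\cF^{-1}:L^p(\R^d)\to L^q(\R^d,\nu)$ (up to a factor $(2\pi)^{-d}$). Applying this to $g:=\cF(\varphi Af)$ yields
\[
\|\varphi Af\|_{L^q(\R^d,\nu)}\leq C\|\cF(\varphi Af)\|_{L^p(\R^d)}.
\]

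Finally, the right-hand side is exactly what Theorem \ref{thm micro}, applied with this $\varphi$ and the given $p$, estimates by $C\|\psi^w f\|_{L^p}+C_N\|\langle x\rangle^{-N}f\|_{H^{-N}}$. Chaining the three inequalities produces \eqref{eq 9ma due}. The entire derivation is essentially a one-line deduction, as the nontrivial microlocal content has already been packaged into Theorem \ref{thm micro}, so there is no real obstacle; the only subtle point is the equivalence between the restriction bound for $\cF$ and for $\cF^{-1}$, which reduces to the invariance of $L^p(\R^d)$ under reflection.
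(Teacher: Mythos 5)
Your proposal is correct and follows exactly the paper's own one-line argument: write $\varphi A=\cF^{-1}\cF\varphi A$ with $\varphi\equiv1$ on $\operatorname{supp}(\nu)$ and invoke Theorem \ref{thm micro}. The extra care you take with the reflection argument relating the restriction bounds for $\cF$ and $\cF^{-1}$ is a fine (and valid) elaboration of a detail the paper leaves implicit.
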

\begin{proof}
    The desired result follows from Theorem \ref{thm micro} by writing $\varphi A= \cF^{-1} \cF \varphi A$ with $\varphi=1$ on the support of $\nu$. 
\end{proof}
 This result applies to the Schr\"odinger propagator $A=U_1(t)$ corresponding to a smooth Hamiltonian of the type $a_2(x,\xi)+a_1(x,\xi)$, with $a_2$ and $a_1$ satisfying assumptions (iii) and (iv) of Section \ref{sec nonsmooth}. Indeed, by \eqref{eq 5ma} we see that \eqref{eq almost diag smooth} holds with $\cA=S_t=
\begin{pmatrix} A_t& B_t\\
C_t& D_t
\end{pmatrix}\in Sp(d,\R)$ --- that is, the Hamiltonian flow associated with $a_2$ --- and the estimate \eqref{eq 9ma due} therefore holds for $A=U_1(t)$, with constants depending on $t$, for all $t$ such that ${\rm det} (B_t)\not=0$.

\section*{Acknowledgements} 
The author expresses his gratitude to Patrik Wahlberg for reading a preliminary version of the manuscript and providing valuable comments. 

The author is a Fellow of the \textit{Accademia delle Scienze di Torino} and a member of the \textit{Societ\`a Italiana di Scienze e Tecnologie Quantistiche (SISTEQ)}.  

\bibliographystyle{abbrv} 
\bibliography{biblio.bib}

\begin{thebibliography}{10}

\bibitem{albeverio}
S.~Albeverio and R.~H\"{o}egh-Krohn.
\newblock Oscillatory integrals and the method of stationary phase in
  infinitely many dimensions, with applications to the classical limit of
  quantum mechanics. {I}.
\newblock {\em Invent. Math.}, 40(1):59--106, 1977.

\bibitem{oliaro}
V.~Asensio, C.~Boiti, D.~Jornet, and A.~Oliaro.
\newblock Global wave front sets in ultradifferentiable classes.
\newblock {\em Results Math.}, 77(2):Paper No. 65, 40, 2022.

\bibitem{benyi_book}
A.~B\'enyi and K.~A. Okoudjou.
\newblock {\em Modulation spaces---with applications to pseudodifferential
  operators and nonlinear {S}chr\"odinger equations}.
\newblock Applied and Numerical Harmonic Analysis. Birkh\"auser/Springer, New
  York, [2020] \copyright 2020.

\bibitem{cordero}
E.~Cordero, G.~Giacchi, and E.~Malinnikova.
\newblock Hardy's uncertainty principle for {S}chr\"odinger equations with
  quadratic {H}amiltonians.
\newblock {\em J. Lond. Math. Soc. (2)}, 111(4):Paper No. e70134, 32, 2025.

\bibitem{CNGR_JMPA}
E.~Cordero, K.~Gr\"{o}chenig, F.~Nicola, and L.~Rodino.
\newblock Wiener algebras of {F}ourier integral operators.
\newblock {\em J. Math. Pures Appl. (9)}, 99(2):219--233, 2013.

\bibitem{CNR_JMP}
E.~Cordero, K.~Gr\"{o}chenig, F.~Nicola, and L.~Rodino.
\newblock Generalized metaplectic operators and the {S}chr\"{o}dinger equation
  with a potential in the {S}j\"{o}strand class.
\newblock {\em J. Math. Phys.}, 55(8):081506, 17, 2014.

\bibitem{CN2008}
E.~Cordero and F.~Nicola.
\newblock Metaplectic representation on {W}iener amalgam spaces and
  applications to the {S}chr\"odinger equation.
\newblock {\em J. Funct. Anal.}, 254(2):506--534, 2008.

\bibitem{CN_nach}
E.~Cordero and F.~Nicola.
\newblock Strichartz estimates in {W}iener amalgam spaces for the
  {S}chr\"odinger equation.
\newblock {\em Math. Nachr.}, 281(1):25--41, 2008.

\bibitem{cordero_book}
E.~Cordero and L.~Rodino.
\newblock {\em Time-frequency analysis of operators}, volume~75 of {\em De
  Gruyter Studies in Mathematics}.
\newblock De Gruyter, Berlin, [2020] \copyright 2020.

\bibitem{dancona2025}
P.~D'Ancona and D.~Fiorletta.
\newblock A dynamical {A}mrein-{B}erthier uncertainty principle,
  arXiv:2504.13746, 2025.

\bibitem{demeter_book}
C.~Demeter.
\newblock {\em Fourier restriction, decoupling, and applications}, volume 184
  of {\em Cambridge Studies in Advanced Mathematics}.
\newblock Cambridge University Press, Cambridge, 2020.

\bibitem{doi}
S.-i. Doi.
\newblock Smoothness of solutions for {S}chr\"odinger equations with unbounded
  potentials.
\newblock {\em Publ. Res. Inst. Math. Sci.}, 41(1):175--221, 2005.

\bibitem{fefferman}
C.~L. Fefferman.
\newblock The uncertainty principle.
\newblock {\em Bull. Amer. Math. Soc. (N.S.)}, 9(2):129--206, 1983.

\bibitem{feichtinger_inter}
H.~G. Feichtinger.
\newblock Banach spaces of distributions of {W}iener's type and interpolation.
\newblock In {\em Functional analysis and approximation ({O}berwolfach, 1980)},
  volume~60 of {\em Internat. Ser. Numer. Math.}, pages 153--165.
  Birkh\"{a}user, Basel-Boston, Mass., 1981.

\bibitem{feichtinger}
H.~G. Feichtinger.
\newblock Banach convolution algebras of {W}iener type.
\newblock In {\em Functions, series, operators, {V}ol. {I}, {II} ({B}udapest,
  1980)}, volume~35 of {\em Colloq. Math. Soc. J\'{a}nos Bolyai}, pages
  509--524. North-Holland, Amsterdam, 1983.

\bibitem{malinnikova}
A.~Fern\'andez-Bertolin and E.~Malinnikova.
\newblock Dynamical versions of {H}ardy's uncertainty principle: a survey.
\newblock {\em Bull. Amer. Math. Soc. (N.S.)}, 58(3):357--375, 2021.

\bibitem{goldberg_schlag}
M.~Goldberg and W.~Schlag.
\newblock A limiting absorption principle for the three-dimensional
  {S}chr\"{o}dinger equation with {$L^p$} potentials.
\newblock {\em Int. Math. Res. Not.}, (75):4049--4071, 2004.

\bibitem{grochenig_book}
K.~Gr\"{o}chenig.
\newblock {\em Foundations of time-frequency analysis}.
\newblock Applied and Numerical Harmonic Analysis. Birkh\"{a}user Boston, Inc.,
  Boston, MA, 2001.

\bibitem{grochenig}
K.~Gr\"{o}chenig.
\newblock Time-frequency analysis of {S}j\"{o}strand's class.
\newblock {\em Rev. Mat. Iberoam.}, 22(2):703--724, 2006.

\bibitem{hormander}
L.~H\"{o}rmander.
\newblock Quadratic hyperbolic operators.
\newblock In {\em Microlocal analysis and applications ({M}ontecatini {T}erme,
  1989)}, volume 1495 of {\em Lecture Notes in Math.}, pages 118--160.
  Springer, Berlin, 1991.

\bibitem{hormander_book}
L.~H\"{o}rmander.
\newblock {\em The analysis of linear partial differential operators. {I}}.
\newblock Classics in Mathematics. Springer-Verlag, Berlin, 2003.
\newblock Distribution theory and Fourier analysis, Reprint of the second
  (1990) edition [Springer, Berlin].

\bibitem{ito}
K.~It\^{o}.
\newblock Generalized uniform complex measures in the {H}ilbertian metric space
  with their application to the {F}eynman integral.
\newblock In {\em Proc. {F}ifth {B}erkeley {S}ympos. {M}ath. {S}tatist. and
  {P}robability ({B}erkeley, {C}alif., 1965/66), {V}ol. {II}: {C}ontributions
  to {P}robability {T}heory, {P}art 1}, pages 145--161. Univ. California Press,
  Berkeley, Calif., 1967.

\bibitem{kato2012}
K.~Kato, M.~Kobayashi, and S.~Ito.
\newblock Remarks on {W}iener amalgam space type estimates for {S}chr\"odinger
  equation.
\newblock In {\em Harmonic analysis and nonlinear partial differential
  equations}, volume B33 of {\em RIMS K\^oky\^uroku Bessatsu}, pages 41--48.
  Res. Inst. Math. Sci. (RIMS), Kyoto, 2012.

\bibitem{kato2019}
T.~Kato and N.~Tomita.
\newblock A remark on the {S}chr\"odinger propagator on {W}iener amalgam
  spaces.
\newblock {\em Math. Nachr.}, 292(2):350--357, 2019.

\bibitem{krantz}
S.~G. Krantz and H.~R. Parks.
\newblock {\em The implicit function theorem}.
\newblock Modern Birkh\"auser Classics. Birkh\"auser/Springer, New York, 2013.
\newblock History, theory, and applications, Reprint of the 2003 edition.

\bibitem{lerner_book}
N.~Lerner.
\newblock {\em Metrics on the phase space and non-selfadjoint
  pseudo-differential operators}, volume~3 of {\em Pseudo-Differential
  Operators. Theory and Applications}.
\newblock Birkh\"{a}user Verlag, Basel, 2010.

\bibitem{lieb_loss_book}
E.~H. Lieb and M.~Loss.
\newblock {\em Analysis}, volume~14 of {\em Graduate Studies in Mathematics}.
\newblock American Mathematical Society, Providence, RI, second edition, 2001.

\bibitem{luef2024fourierrestrictionschattenclass}
F.~Luef and H.~J. Samuelsen.
\newblock Fourier restriction for {S}chatten class operators and functions on
  phase space.
\newblock {\em Int. Math. Res. Not. IMRN}, (2):Paper No. rnae291, 22, 2025.

\bibitem{meyer}
Y.~Meyer.
\newblock {\em Wavelets and operators}, volume~37 of {\em Cambridge Studies in
  Advanced Mathematics}.
\newblock Cambridge University Press, Cambridge, 1992.
\newblock Translated from the 1990 French original by D. H. Salinger.

\bibitem{muscalu_book}
C.~Muscalu and W.~Schlag.
\newblock {\em Classical and multilinear harmonic analysis. {V}ol. {I}}, volume
  137 of {\em Cambridge Studies in Advanced Mathematics}.
\newblock Cambridge University Press, Cambridge, 2013.

\bibitem{nicola_trapasso}
F.~Nicola and S.~I. Trapasso.
\newblock On the pointwise convergence of the integral kernels in the
  {F}eynman-{T}rotter formula.
\newblock {\em Comm. Math. Phys.}, 376(3):2277--2299, 2020.

\bibitem{nicola_book}
F.~Nicola and S.~I. Trapasso.
\newblock {\em Wave packet analysis of {F}eynman path integrals}, volume 2305
  of {\em Lecture Notes in Mathematics}.
\newblock Springer, Cham, [2022] \copyright 2022.

\bibitem{trapasso_rodino}
L.~Rodino and S.~I. Trapasso.
\newblock An introduction to the {G}abor wave front set.
\newblock In {\em Anomalies in partial differential equations}, volume~43 of
  {\em Springer INdAM Ser.}, pages 369--393. Springer, Cham, [2021] \copyright
  2021.

\bibitem{rodino_wahlberg}
L.~Rodino and P.~Wahlberg.
\newblock The {G}abor wave front set.
\newblock {\em Monatsh. Math.}, 173(4):625--655, 2014.

\bibitem{SW}
R.~Schulz and P.~Wahlberg.
\newblock Microlocal properties of {S}hubin pseudodifferential and localization
  operators.
\newblock {\em J. Pseudo-Differ. Oper. Appl.}, 7(1):91--111, 2016.

\bibitem{schulz_wahlberg}
R.~Schulz and P.~Wahlberg.
\newblock Equality of the homogeneous and the {G}abor wave front set.
\newblock {\em Comm. Partial Differential Equations}, 42(5):703--730, 2017.

\bibitem{sjostrand}
J.~Sj\"{o}strand.
\newblock Wiener type algebras of pseudodifferential operators.
\newblock In {\em S\'{e}minaire sur les \'{E}quations aux {D}\'{e}riv\'{e}es
  {P}artielles, 1994--1995}, pages Exp. No. IV, 21. \'{E}cole Polytech.,
  Palaiseau, 1995.

\bibitem{stein_1993}
E.~M. Stein.
\newblock {\em Harmonic analysis: real-variable methods, orthogonality, and
  oscillatory integrals}, volume~43 of {\em Princeton Mathematical Series}.
\newblock Princeton University Press, Princeton, NJ, 1993.
\newblock With the assistance of Timothy S. Murphy, Monographs in Harmonic
  Analysis, III.

\bibitem{tao}
T.~Tao.
\newblock Some recent progress on the restriction conjecture.
\newblock In {\em Fourier analysis and convexity}, Appl. Numer. Harmon. Anal.,
  pages 217--243. Birkh\"{a}user Boston, Boston, MA, 2004.

\bibitem{tao_book}
T.~Tao.
\newblock {\em Nonlinear dispersive equations}, volume 106 of {\em CBMS
  Regional Conference Series in Mathematics}.
\newblock Published for the Conference Board of the Mathematical Sciences,
  Washington, DC; by the American Mathematical Society, Providence, RI, 2006.
\newblock Local and global analysis.

\bibitem{tataru}
D.~Tataru.
\newblock Phase space transforms and microlocal analysis.
\newblock In {\em Phase space analysis of partial differential equations.
  {V}ol. {II}}, Pubbl. Cent. Ric. Mat. Ennio Giorgi, pages 505--524. Scuola
  Norm. Sup., Pisa, 2004.

\bibitem{toft}
J.~Toft.
\newblock Continuity properties for modulation spaces, with applications to
  pseudo-differential calculus. {I}.
\newblock {\em J. Funct. Anal.}, 207(2):399--429, 2004.

\bibitem{toft2023}
J.~Toft, D.~G. Bhimani, and R.~Manna.
\newblock Fractional {F}ourier transforms, harmonic oscillator propagators and
  {S}trichartz estimates on {P}ilipovi\'c{} and modulation spaces.
\newblock {\em Appl. Comput. Harmon. Anal.}, 67:Paper No. 101580, 38, 2023.

\bibitem{trapasso_ad}
S.~I. Trapasso.
\newblock Almost diagonalization of pseudodifferential operators.
\newblock In {\em Landscapes of time-frequency analysis}, Appl. Numer. Harmon.
  Anal., pages 323--342. Birkh\"auser/Springer, Cham, 2019.

\bibitem{wahlberg2020}
P.~Wahlberg.
\newblock The {G}abor wave front set of compactly supported distributions.
\newblock In {\em Advances in microlocal and time-frequency analysis}, Appl.
  Numer. Harmon. Anal., pages 507--520. Birkh\"auser/Springer, Cham, [2020]
  \copyright 2020.

\bibitem{wolff}
T.~H. Wolff.
\newblock {\em Lectures on harmonic analysis}, volume~29 of {\em University
  Lecture Series}.
\newblock American Mathematical Society, Providence, RI, 2003.
\newblock With a foreword by Charles Fefferman and a preface by Izabella \L
  aba.

\bibitem{yajima1991}
K.~Yajima.
\newblock Schr\"odinger evolution equations with magnetic fields.
\newblock {\em J. Analyse Math.}, 56:29--76, 1991.

\bibitem{zhao2024}
G.~Zhao and W.~Guo.
\newblock Schr\"odinger propagator on {W}iener amalgam spaces in the full
  range.
\newblock {\em Nagoya Math. J.}, 256:953--969, 2024.

\end{thebibliography}
\end{document}